\numberwithin{equation}{section}
\newtheorem{Theorem}{Theorem}[section]
\newtheorem{Lemma}[Theorem]{Lemma}
\newtheorem{Definition}[Theorem]{Definition}
\newtheorem{Remark}[Theorem]{Remark}
\numberwithin{equation}{section}
 \def\p{\partial} 
\def \Vh0{\stackrel{\circ}{V}_h} \def\to{\rightarrow}
\def\Om{\Omega}  
\newcommand{\q}{\quad}    \def\R{{\mathbb R}}
  \def\f{\frac}  
\def\m{\mbox} \def\t{\times}  
\def\o{\overline }
\def\p{\partial}
\newcommand{\lc}
{\mathrel{\raise2pt\hbox{${\mathop<\limits_{\raise1pt\hbox
{\mbox{$\sim$}}}}$}}}
\newcommand{\gc}
{\mathrel{\raise2pt\hbox{${\mathop>\limits_{\raise1pt\hbox{\mbox{$\sim$}}}}$}}}
\newcommand{\ec}
{\mathrel{\raise2pt\hbox{${\mathop=\limits_{\raise1pt\hbox{\mbox{$\sim$}}}}$}}}
\def\bb{\begin{equation}} \def\ee{\end{equation}}
\def\beqn{\begin{eqnarray}}  \def\eqn{\end{eqnarray}}
\def\beqnx{\begin{eqnarray*}} \def\eqnx{\end{eqnarray*}}
\def\bn{\begin{enumerate}} \def\en{\end{enumerate}}
\def\bd{\begin{description}} \def\ed{\end{description}}
\title{
Well-posedness of a Pulsed Electric Field Model \\
in Biological Media and its Finite Element Approximation}
\author{
Habib Ammari\footnote{Department of Mathematics and Applications, Ecole Normale Sup$\acute{\text{e}}$rieure, 45 Rue d'Ulm, 75005 Paris, France. The work of this author
was supported by ERC Advanced Grant Project MULTIMOD--267184.
(habib.ammari@ens.fr).}
\and Dehan Chen\footnote{Department of Mathematics, Chinese University of Hong Kong, Shatin, N.T., Hong Kong (dhchen@math.cuhk.edu.hk).}
\and Jun Zou\footnote{Department of Mathematics, Chinese University of Hong Kong, Shatin, N.T., Hong Kong.
The work of this author was substantially supported by Hong Kong RGC grants (projects 405513 and 404611).
(zou@math.cuhk.edu.hk).}
}
\begin{document}

\date{}
\maketitle

\begin{abstract}
This work aims at providing a mathematical and numerical framework for the analysis on
the effects of pulsed electric fields on biological media.  Biological tissues and cell suspensions are described as having a heterogeneous permittivity and a heterogeneous conductivity.
Well-posedness of the model problem and the regularity of its solution are established.
A fully discrete finite element scheme
is proposed for the numerical approximation of the potential distribution as a function of time and space simultaneously for an arbitrary shaped pulse, and it is demonstrated to enjoy the optimal convergence order in both space and time.
The presented results and numerical scheme have potential applications in the fields of medicine, food sciences, and biotechnology.

\end{abstract}

\bigskip

\noindent {\footnotesize Mathematics Subject Classification: 65M60, 78M30.
}

\noindent {\footnotesize Keywords: pulsed electric field, biological medium, well-posedness, numerical schemes, finite element, convergence.}

\section{Introduction}

The electrical properties of biological tissues and cell suspensions determine the pathways of current flow through the medium and, thus, are very important in the analysis of a wide range of biomedical applications and in food sciences and biotechnology \cite{ food, biotech, blood}.

A biological tissue is described as having a permittivity and a conductivity \cite{bio3}. The conductivity can be regarded as a measure of the ability of its charge to be transported throughout its volume by an applied electric field while the permittivity is a measure of the ability of its dipoles to rotate or its charge to be stored by an applied external field. At low frequencies, biological tissues behave like a conductor but capacitive effects become important at higher frequencies due to the membranous structures \cite{schwan3, jinkeun}.

In this paper, we consider
a model problem for the effect of pulsed electric fields on biological tissues.
Our goal is to study the electric behavior of a biological tissue under the influence of
a pulsed electric field. It is of great importance to understand the effects of the pulse shape on the potential distribution in the tissue medium. We provide a numerical scheme for computing the potential distribution as a function of time and space simultaneously for an arbitrary shaped pulse. Our results are expected to have important applications in neural activation during deep brain simulations \cite{bio1, bio2}, debacterization of liquids, food processing \cite{food2}, and  biofouling prevention \cite{bio4}. Our numerical scheme can be also used for
selective spectroscopic imaging of the electrical properties of biological media \cite{laure}.
It is challenging to specify the pulse shape in order to give rise to selective imaging of cell suspensions \cite{pulse1,pulse2}.

The paper is organized as follows. In section \ref{sec2} we introduce the model equation and some notations and preliminary results. We recall the method of continuity and the notions of weak and strong solutions. Section \ref{sec3} is devoted to existence, uniqueness, and regularity results for the solution to the model problem. We first derive an a priori energy estimate. Then we prove existence and uniqueness of the weak solution. Finally, we investigate the interface problem where the conductivity and permittivity distributions may be discontinuous, which is a common feature of biological media.  It is shown in section \ref{sec3} that the solution to the interface problem has a higher regularity in each individual region than in the entire domain. This  regularity result is critical for our further numerical analysis.
In section \ref{sec4} we investigate the numerical approximation of the solution to the interface problem. Assuming that the domain is a convex polygon, we present a semi-discrete scheme and prove the error estimates for it in both $H^1$-  and $L^2$-norms. With these estimates at hand,
we then process to propose a fully-discrete scheme and establish the error estimates for it in both $H^1$- and $L^2$-norms. It is worth mentioning that both
semi-discrete and fully discrete scheme achieve optimal convergence order in both
$H^1$- and $L^2$-norm, provided that the interface is exactly resolved.

Let us end this section with some notations used in this paper. For
a domain $U\subset  \mathbb{R}^n, n=2$ or $3$, each integer $k\geq 0$ and real $p$ with
$1\leq p\leq \infty$, $W^{k,p}(U)$ denotes the standard Sobolev space of functions with their weak derivatives
 of order up to $k$ in the Lebesgue space $L^p(U)$. When $p=2$, we write $H^k(U)$ for $W^{k,2}(U)$.
The scalar product of $L^2(\Omega)$ is denoted by $(\cdot,\cdot)$.
If $X$ is a Banach space with norm $\|\cdot\|_X$
and $J\subset \mathbb{R}$ is an interval, then $L^2(J;X)$ represents
the Banach space consisting of all quadratically integrable functions $f:J\to X$
(in B\^ocher sense) with norm:
$\|f(t)\|_{L^2(J;X)}:=\left(\int_J\|f(t)\|_X^2dt\right)^{1/2}$.
We denote by $H^1(J;X)$ the space of all functions $u\in L^2(J;X)$ such that
$u'$, the weak derivative of  $u$ with respect to time variable, exists and belongs to $L^2(J;X)$, endowed with the norm
$\|u\|_{H^1(J;X)}=\left(\|u\|^2_{L^2(J;X)}+\|u'\|^2_{L^2(J;X)}\right)^{1/2}$.
For $1\leq i,j\leq n$, we write $D_iu=\p u/\p x_i$ and $D_{i,j}u=\p^2 u/\p x_i\p x_j$.
For $u\in H^1(U)$ and $f\in H^1(J;H^1(U))$,
we also set the semi-norms $|u|_{H^1(U)}:=\|\nabla u\|_{L^2(U)}$ and
$|f|_{L^2(J;H^1(U))}:=(\int_{J}|f(t)|^2_{H^1(U)}dt)^{\f{1}{2}}$.
For ease of notation, we do not always distinguish between the notation of 
$u$, $u(t)$, $u(t,x)$ and $u(t,\cdot)$. Sometimes, the notation is not changed when a 
function  defined on $\Omega$ restricted to a subset. 
For the sake of brevity, we systematically use the expression
$A\lesssim B$ to indicate that $A\leq C B$ for constant $C$ that is independent of
 $A$ and $B$. In some special cases, we may specify the used constants.

\section{Preliminary} \label{sec2}

Let $\Omega$ be a bounded domain with Lipschitz boundary. Let $\sigma$ and $\varepsilon$
denote the conductivity and permittivity distributions inside $\Omega$. We assume that
$\sigma$ and $\varepsilon$  belong to $L^\infty(\Omega)$. Biological tissues induce capacitive effects due to their cell membrane structures \cite{bio3}. When they are exposed to electric
pulses, the voltage potential $u$ is a solution to the following time-dependent equation \cite{addref1, addref2}
\bb\label{Equ}
\left\{\begin{array}{rlll}
-\nabla\cdot\left(\sigma(x)\nabla u(t,x)+\varepsilon(x)\nabla u'(t,x)\right)&=&f(t,x), \quad (t,x)\in (0,T)\times \Omega, \\
 u&=&0 ,\quad (t,x) \in (0,T)\times\partial\Omega,\\
u(0,x) &=&u_0,\quad  x\in\Omega,
\end{array}\right.
\ee
where $u_0$ is the initial voltage and $T$ is the final observation time and $f \in L^2(]0,T[;H^{-1}(\Omega))$ is the
electric pulse.

The goal of this work is to establish the well-posedness of the model system (\ref{Equ}) and derive
a fully discrete finite element scheme for the numerical solution of the system.
Of our special interest is the case when the physical coefficients are discontinuous in $\Omega$, namely
they may have large jumps across the interface between two different media,
which is a common feature in applications, and the conductivity distribution $\sigma(x)$ does not need to be bounded 
below strictly positively. 
As far as we know, this is the first mathematical and numerical work on pulsed electrical fields in capacitive media.
The main difficulty comes from the fact that (\ref{Equ}) does not belong to the well-studied classes of time-dependent
equations. Our results in this paper have potential applications in cell electrofusion and electroporation using eletric pulses \cite{addref2} and in electrosensing \cite{fish1}.

In this section, we first introduce some notions and preliminary results.
For the sake of brevity, we write $I=]0,T[$, $H=L^2(\Omega)$, $V=H^1_0(\Omega)$ with its dual space $V'=H^{-1}(\Omega)$
and $\mathcal{X}=H^1_0(\Omega)\cap H^2(\Omega)$.
 Clearly,
$V\subset H\subset V'$ is a triple of spaces (cf. \cite[Chapter 1]{Yagi}), i.e.,
\begin{enumerate}
\item[$(1)$]\ the embeddings $V\subset H\subset V'$ are dense and continuous;

\item[$(2)$]\ $\{V',V\}$ forms an adjoint pair with duality product $\langle \cdot,\cdot\rangle_{V'\times V}$;

\item[$(3)$]\  the duality product  $ \langle \cdot,\cdot \rangle_{V'\times V}$ satisfies
$$
\langle u,v\rangle_{V'\times V}=(u,v),  \quad\ \forall \ u\in H, \ v\in V.
$$
\end{enumerate}
We also introduce two bilinear forms $a_1(u,v)$ and $a_2(u,v)$  on $V$ as follows:
\begin{equation}
a_1(u,v)=\int_{\Omega}\sigma(x)\nabla u(x)\cdot \nabla v(x)dx,
\quad a_2(u,v)=\int_{\Omega}\varepsilon(x)\nabla u(x)\cdot \nabla v(x)dx,
\quad u,v\in V.
\end{equation}

We first define the weak and strong solutions to the equation (\ref{Equ}).
We  adapt the widely used notions of weak and strong solutions of parabolic equations
(see, for instance, \cite{Robinson}).
\begin{Definition}\label{Def-weak}
Let $u_0\in V$ and $f\in L^2(I;V')$. A function $u\in H^1(I;V)$ is called
a week solution of  {$(\ref{Equ})$} if $u(0)=u_0$ and it satisfies the following
weak formulation:
\begin{equation}\label{Def}
a_1(u(t,\cdot),v)+a_2(u'(t,\cdot),v)=\langle f(t,\cdot),v\rangle_{V'\times V}
\end{equation}
for all $v\in H^1_0(\Omega)$ and a.e. $t\in I$.
\end{Definition}

\begin{Definition} \label{Def2}
Let $f\in L^2(I;H)$ and $u_0\in \mathcal{X}$.
Then, a function
$u\in H^1(I;\mathcal{X})$ is called a strong solution of (\ref{Equ}) if $u(0)=u_0$ and
the relation
\begin{equation}
-\nabla\cdot\left(\sigma(x) \nabla u(t,x)+\varepsilon(x) \nabla u'(t,x)\right)=f(t,x)
\end{equation}
holds for a.e. $t\in I$ and a.e. $x\in \Omega$.
\end{Definition}
\begin{Remark}
Let $X$ be a Banach space. From \cite[Proposition\,7.1]{Robinson}
we know that $H^1(I;X)\Subset C(\overline{I};X)$
continuously and
\bb\label{embeding}
\sup_{t\in\overline{I}}\|u(t)\|\lesssim
\|u\|_{H^1(I;X)}.
\ee
In particular, we have that $u\in C(\o{I};V)$ for $u\in H^1(I;V)$.
\end{Remark}

To prove the existence below, we will use the so-called ``method of continuity'', whose key tool is the following
lemma (e.g., \cite{Gilbarg}).
\begin{Lemma}\label{Continuity}
Let $X$ be a Banach space, $Y$ a normed linear
space, and  $L_0, L_1$ two bounded linear operators from $X$
to $Y$. For each $\lambda\in[0,1]$, set
$$
L_\lambda=(1-\lambda)L_0+\lambda L_1,
$$
and suppose that there exists a constant $C$ such that
$$
\|x\|_{X}\leq C\|L_\lambda x\|_{Y},  \quad \forall\ x\in X, \quad \lambda\in[0,1].
$$
Then $L_1$ maps $X$ onto  $Y$
if and only if $L_0$ maps  $X$ onto  $Y$.
\end{Lemma}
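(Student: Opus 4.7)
The plan is the standard connectedness argument in $\lambda$, exploiting the uniform a priori estimate. First I would note that the hypothesis $\|x\|_X \leq C\|L_\lambda x\|_Y$ has two immediate consequences: each $L_\lambda$ is injective, and whenever $L_\lambda$ happens to be surjective it is automatically a bijection whose inverse satisfies the uniform bound $\|L_\lambda^{-1}\|_{Y\to X} \leq C$. I would also record the trivial but crucial observation that $L_\lambda - L_{\lambda_0} = (\lambda - \lambda_0)(L_1 - L_0)$, so the family $\{L_\lambda\}$ depends Lipschitz-continuously on $\lambda$ in operator norm with constant $\|L_1 - L_0\|$.

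Next, I would introduce the set $S = \{\lambda \in [0,1] : L_\lambda(X) = Y\}$. Since $[0,1]$ is connected, once $S$ is shown to be both nonempty and open in $[0,1]$, the equivalence $0 \in S \Leftrightarrow 1 \in S$ follows by symmetry: swapping $L_0$ with $L_1$ and replacing $\lambda$ by $1-\lambda$ leaves the family and its hypotheses invariant, so the same analysis applies from either endpoint.

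The only substantive step is openness of $S$. Fix $\lambda_0 \in S$ and $y \in Y$. Because $L_{\lambda_0}$ is a bijection, solving $L_\lambda x = y$ is equivalent to the fixed-point problem
\begin{equation*}
x = L_{\lambda_0}^{-1}\bigl(y - (\lambda - \lambda_0)(L_1 - L_0)x\bigr) =: T_y(x).
\end{equation*}
A short computation using $\|L_{\lambda_0}^{-1}\|_{Y\to X} \leq C$ and $\|L_1 - L_0\|_{X\to Y} \leq \|L_0\| + \|L_1\| =: M$ shows that $T_y$ is Lipschitz with constant $CM|\lambda - \lambda_0|$. Hence for every $\lambda$ with $|\lambda - \lambda_0| < \delta := (2CM)^{-1}$ the map $T_y$ is a strict contraction on the Banach space $X$, and Banach's fixed-point theorem furnishes a unique $x \in X$ with $L_\lambda x = y$; thus $\lambda \in S$.

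The main obstacle, and the whole point of the uniform constant $C$ appearing in the hypothesis, is precisely that this step size $\delta$ is independent of $\lambda_0 \in S$ and of $y$. With that uniformity in hand, a finite chain of overlapping intervals of length $\delta$ lets one walk from either endpoint of $[0,1]$ to the other, proving the equivalence. I emphasize that completeness of $Y$ is never used: the Banach structure of $X$ alone supports the contraction, and the a priori estimate itself would in any event force the range of each $L_\lambda$ to be closed in $Y$.
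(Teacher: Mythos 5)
Your proof is correct and is exactly the standard contraction-mapping argument for the method of continuity (it is the proof of Theorem 5.2 in the cited Gilbarg--Trudinger reference); the key points — injectivity and the uniform bound $\|L_{\lambda_0}^{-1}\|\leq C$ from the a priori estimate, the fixed-point reformulation, and the $\lambda_0$-independent step size $\delta$ enabling the finite chain — are all present and handled properly. The paper itself states this lemma without proof, merely citing \cite{Gilbarg}, so there is nothing to compare beyond noting that your argument matches the standard one.
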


Let $u$ be a function in a domain $U\subset\R^n$, $W\Subset U$ and $e_k$
the unit coordinate vector in the $x_k$ direction. We define
the difference quotient of $u$ in the direction $e_k$ by
\bb\label{DQ}
D_k^hu(x)=\f{u(x+he_k)-u(x)}{h}
\ee
for $x\in W$ and $h\in\R$ with $0<|h|< \mathrm{dist}(W,\p U)$.
We will use the following lemma in
the proof of Theorem \ref{Regularity}, concerning the
difference quotient of functions in Sobolev spaces (cf. \cite[Lemma 7.23]{Gilbarg}).
\begin{Lemma}\label{DQ1}
Suppose that $u\in H^1(U)$. Then for each
$W\Subset U$,
$$
\|D_k^hu\|_{L^2(W)}\leq \|D_k u\|_{L^2(U)} , \, \q \forall\, h: ~
0<|h|<\f{1}{2} \mathrm{dist}(W,\p U)\,.
$$
\end{Lemma}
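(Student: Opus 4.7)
The plan is to prove the inequality first for smooth functions and then extend to the general case by density. The key identity for $u \in C^1(U)\cap H^1(U)$ is the fundamental theorem of calculus in the direction $e_k$: assuming (without loss of generality) that $h>0$, the condition $|h|<\tfrac{1}{2}\mathrm{dist}(W,\p U)$ ensures that for every $x\in W$ and every $t\in[0,h]$ the point $x+te_k$ lies inside $U$, so one can legitimately write
$$
D_k^h u(x)=\f{1}{h}\bigl(u(x+he_k)-u(x)\bigr)=\f{1}{h}\int_0^h D_ku(x+te_k)\,dt.
$$

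Next I would apply the Cauchy--Schwarz inequality to the right-hand side, yielding
$$
|D_k^h u(x)|^2\leq \f{1}{h}\int_0^h |D_ku(x+te_k)|^2\,dt,
$$
integrate this over $x\in W$, and exchange the order of integration by Fubini. For each fixed $t\in[0,h]$, the inclusion $W+te_k\subset U$ (guaranteed by the distance hypothesis) together with a translation change of variables $y=x+te_k$ gives
$$
\int_W |D_ku(x+te_k)|^2\,dx\leq \int_U |D_ku(y)|^2\,dy=\|D_ku\|_{L^2(U)}^2,
$$
and the outer $\tfrac{1}{h}\int_0^h$ then produces exactly $\|D_ku\|^2_{L^2(U)}$ as desired. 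The case $h<0$ is handled by the same argument with $\int_h^0$ in place of $\int_0^h$.

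For the general case $u\in H^1(U)$, I would invoke Meyers--Serrin density ($C^\infty(U)\cap H^1(U)$ is dense in $H^1(U)$) to pick a sequence $u_m\in C^\infty(U)\cap H^1(U)$ with $u_m\to u$ in $H^1(U)$. The inequality applied to each $u_m$ reads $\|D_k^h u_m\|_{L^2(W)}\leq \|D_ku_m\|_{L^2(U)}$. Since $D_k^h$ is a bounded linear operator on $L^2$ (with the hypothesis $|h|<\tfrac12\mathrm{dist}(W,\p U)$ ensuring $D_k^hu_m\to D_k^hu$ in $L^2(W)$) and $D_ku_m\to D_ku$ in $L^2(U)$, passing to the limit gives the claim.

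There is no real obstacle: the only delicate point is verifying that the translated points $x+te_k$ remain in $U$ whenever $x\in W$ and $|t|\leq |h|$, which is precisely the role of the hypothesis $|h|<\tfrac12\mathrm{dist}(W,\p U)$ (the factor $\tfrac12$ is more than needed for this bound but is convenient when applying the lemma later, since it leaves room for a further translate). The rest is the standard combination of Cauchy--Schwarz, Fubini, a translation change of variables, and a density argument.
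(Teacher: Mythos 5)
Your proof is correct. The paper does not prove this lemma itself but simply cites \cite[Lemma 7.23]{Gilbarg}, and your argument (fundamental theorem of calculus for smooth functions, Cauchy--Schwarz, Fubini with a translation change of variables, then a density argument via Meyers--Serrin) is precisely the standard proof given in that reference, so there is nothing to add.
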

We end up with an analogue of \cite[Lemma 7.24]{Gilbarg}, and provide 
an outline of its proof.  
\begin{Lemma}\label{DQ3}
Let $u\in L^2(I;L^2(U))$, $W\Subset U$ and suppose that there exists a positive constant $K$ such
that $\|D_k^hu\|_{L^2(I;L^2(W))}\leq K$ for all $0<|h|<\f{1}{2} \mathrm{dist}(W,\p U)$.
Then $\|D_k u\|_{L^2(I;L^2(W))}\leq K$.
\end{Lemma}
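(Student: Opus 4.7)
The plan is to adapt the classical weak-compactness argument from \cite[Lemma 7.24]{Gilbarg} to the space-time setting $L^2(I;L^2(W))$, which is a separable Hilbert space (isometric to $L^2(I\times W)$). The goal is to extract a weak subsequential limit of the difference quotients, identify it with the distributional derivative $D_ku$, and then invoke lower semicontinuity of the norm. Note that one cannot simply apply Lemma \ref{DQ1} pointwise in $t$, since the hypothesis only provides a bound on the space-time $L^2$ norm of $D_k^h u$, not a uniform-in-$t$ bound.

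First I would fix $h_0=\tfrac{1}{2}\mathrm{dist}(W,\p U)$ and pick any sequence $h_j\to 0$ with $0<|h_j|<h_0$. By hypothesis the family $\{D_k^{h_j}u\}_j$ is bounded by $K$ in the reflexive Hilbert space $L^2(I;L^2(W))$, so Banach--Alaoglu yields a subsequence (not relabelled) and a function $v\in L^2(I;L^2(W))$ with $D_k^{h_j}u\rightharpoonup v$ weakly, and weak lower semicontinuity gives $\|v\|_{L^2(I;L^2(W))}\leq K$. Next I would identify $v$ with $D_ku$ in the distributional sense on $I\times W$. For any test function $\varphi\in C_c^\infty(I\times W)$, once $|h_j|$ is small enough that $\mathrm{supp}(\varphi)+h_je_k\Subset I\times U$, a straightforward change of variables produces the discrete integration-by-parts identity
\[
\int_I\int_W (D_k^{h_j}u)\,\varphi\,dx\,dt = -\int_I\int_W u\,(D_k^{-h_j}\varphi)\,dx\,dt.
\]
Since $\varphi$ is smooth with compact support, $D_k^{-h_j}\varphi\to D_k\varphi$ uniformly, hence strongly in $L^2(I\times W)$. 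Combining the weak convergence of $D_k^{h_j}u$ on the left with the strong convergence of $D_k^{-h_j}\varphi$ on the right, I pass to the limit to obtain
\[
\int_I\int_W v\,\varphi\,dx\,dt = -\int_I\int_W u\,D_k\varphi\,dx\,dt,
\]
which is precisely the defining relation for the weak derivative $v=D_ku$ on $I\times W$. The norm bound then transfers to $D_ku$, giving the conclusion.

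The main obstacle, modest as it is, lies in the identification step: one must combine a weakly convergent sequence (the difference quotients of $u$) with a strongly convergent sequence (the difference quotients of $\varphi$) inside one space-time integral, and one must track the domains so that the shifted supports stay inside $I\times U$. Once weak $L^2$ compactness is invoked, the rest is essentially the classical distribution-theoretic verification, lifted from $L^2(W)$ to $L^2(I;L^2(W))$ with the help of the Hilbert space isomorphism $L^2(I;L^2(W))\cong L^2(I\times W)$.
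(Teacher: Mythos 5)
Your proposal is correct and follows essentially the same route as the paper's proof: extract a weakly convergent subsequence of the difference quotients via Banach--Alaoglu, identify the weak limit with $D_ku$ through discrete integration by parts against smooth test functions, and conclude by weak lower semicontinuity of the norm. The only cosmetic difference is that you test against general $\varphi\in C_c^\infty(I\times W)$ while the paper uses tensor products $\alpha(t)\varphi(x)$ with $\alpha\in C^\infty_0(I)$ and $\varphi\in C^\infty_0(W)$, which is an equivalent identification step.
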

\begin{proof}
 Banach-Alaoglu theorem implies that there exists a sequence
 $\{h_m\}^\infty_{m=1}$ with $h_m\to 0$ and a function $v\in L^2(I;L^2(W))$
 such that $\|v\|_{L^2(I;L^2(W))}\leq K$, and  for any $\varphi\in C^\infty_0(W)$ and $\alpha\in C^\infty_0(I)$,
 $$
 \int_I\int_W \alpha(t) \varphi D_k^{h_m}u(t)dxdt  \to  \int_I\int_W \alpha(t)\varphi v(t) dxdt \q \mbox{as} ~~m\to \infty\,.  
 $$
On the other hand,  we have
 $$
 \int_I\int_W \alpha(t) \varphi D_k^{h_m}u(t) dxdt= -\int_I\int_W \alpha(t) u(t)D_k^{-h_m}\varphi dxdt  \to
  -\int_I\int_W\alpha(t) u(t)D_k\varphi dxdt,
 $$
 as $m\to \infty$.  Hence, we have
$$
\int_I\int_W \alpha(t)\left( u(t)D_k\varphi+v(t)\varphi\right)dx dt=0.
$$
Using the arbitrariness of $\alpha$  and $\varphi$, we know for a.e. $t\in I$, $v(t)=D_ku(t)$ in weak sense,
hence $v=D_ku$ in $L^2(I;L^2(W))$.
\end{proof}
\begin{Lemma}\label{tool1}
Let $U\subset \R^n$ be a domain and $1\leq i\leq n$.
If $u\in H^1(I;L^2(U))$, $D_iu'\in L^2(I;L^2(U))$ and $D_iu(0)\in L^2(U)$, then
$D_iu\in L^2(I;L^2(U))$ and
 $$
 \|D_iu\|_{L^2(I;L^2(U))}\lesssim  \|D_iu'\|_{L^2(I;L^2(U))}+\|D_iu(0)\|_{L^2(U)}.
 $$
\end{Lemma}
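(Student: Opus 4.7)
The plan is to recover $D_iu$ as an $L^2(U)$-valued function by integrating the assumed quantity $D_iu'$ in time starting from the value $D_iu(0)$, and then to identify this candidate with the weak spatial derivative of $u(t)$. Concretely, I would define
\[
w(t)\;:=\;D_iu(0)+\int_0^t D_iu'(s)\,ds,\qquad t\in\overline I,
\]
where the integral is understood in the Bochner sense in $L^2(U)$. Since $D_iu'\in L^2(I;L^2(U))$ and $D_iu(0)\in L^2(U)$, the function $w$ is well-defined, belongs to $C(\overline I;L^2(U))\subset L^2(I;L^2(U))$, and by Minkowski's inequality together with Cauchy--Schwarz in time satisfies the pointwise bound
\[
\|w(t)\|_{L^2(U)}\;\le\;\|D_iu(0)\|_{L^2(U)}+\sqrt{T}\,\|D_iu'\|_{L^2(I;L^2(U))}.
\]

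The core step is to show that $w(t)=D_iu(t)$ in the weak sense for a.e.\ $t$, which will in particular establish that $D_iu$ exists in $L^2(I;L^2(U))$. Fix $\varphi\in C_0^\infty(U)$. Because $u'(s)\in L^2(U)$ admits the weak derivative $D_iu'(s)\in L^2(U)$ for a.e.\ $s$, and $D_iu(0)$ is the weak derivative of $u(0)\in L^2(U)$, commuting the Bochner integral with the bounded linear functional $v\mapsto \int_U v\,\varphi\,dx$ (Fubini) yields
\[
\int_U w(t)\varphi\,dx\;=\;-\int_U u(0)D_i\varphi\,dx\;-\;\int_0^t\!\!\int_U u'(s)\,D_i\varphi\,dx\,ds.
\]
Invoking the fundamental theorem of calculus for $H^1(I;L^2(U))$-functions, namely $u(t)-u(0)=\int_0^t u'(s)\,ds$ in $L^2(U)$, and swapping Bochner time integral with the $L^2$-pairing against $D_i\varphi$, the right-hand side collapses to $-\int_U u(t)D_i\varphi\,dx$. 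By the arbitrariness of $\varphi$, $w(t)$ is the weak $D_i$-derivative of $u(t)$, so $D_iu=w$ in $L^2(I;L^2(U))$.

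With this identification in hand, squaring the pointwise estimate on $\|w(t)\|_{L^2(U)}$, integrating over $I$, and taking square roots yields the asserted inequality with an absolute constant depending only on $T$. The main technical hurdle is the commutation of the time Bochner integral with the spatial weak derivative in the identification $w=D_iu$; once this interchange is carefully justified via duality against $C_0^\infty(U)$ test functions and the vector-valued FTC, the remainder of the argument is a routine Minkowski/Cauchy--Schwarz computation.
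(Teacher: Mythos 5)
Your proof is correct and follows essentially the same route as the paper: both recover $D_iu(t)$ as $D_iu(0)+\int_0^t D_iu'(s)\,ds$ by testing against $\varphi\in C_0^\infty(U)$, using the vector-valued fundamental theorem of calculus for $u\in H^1(I;L^2(U))$ together with Fubini to interchange the time integral with the spatial pairing. Your write-up is in fact slightly more complete, since you also spell out the Minkowski/Cauchy--Schwarz step that yields the final norm estimate, which the paper leaves implicit.
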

\begin{proof}
Since $u\in H^1(I;L^2(U))$, we have
$$
u(t)=u(0)+\int^t_0u'(s)ds,  \quad \forall\,t\in I.
$$
By Fubini's theorem, we know  that for any $\phi\in C^\infty_0(U)$,
$$
\int_{U}u(t)D_i\phi dx=\int_ U u(0)D_i\phi dx+\int_U\int^t_0 u'(s)D_i \phi dsdx= -\int_U\left(D_i u(0)+\int^t_0 D_i u'(s)ds\right)\phi dx,
$$
which implies that
$$
D_i u(t)=D_i u(0)+\int^t_0 D_i u'(s)ds.
$$
This completes the proof.
\end{proof}
\section{Existence and regularity}\label{sec3}

We now introduce a basic assumption for 
the existence and uniqueness of weak solutions to (\ref{Equ}).

\bn
\item[$(A1)$] $\sigma$, $\varepsilon \in L^{\infty}(\Omega)$, and there exist two positive constants $m$ and $M$ such that
   $ 0\leq \sigma(x)\leq M$  and $m\leq \varepsilon(x)\leq M$ for a.e. $x\in\Omega$.
\en
Let us recall that there exist
two operators $\mathcal{A}_1,\mathcal{A}_2:V\rightarrow V'$ associated with the bilinear
 forms $a_1(\cdot,\cdot)$
and $a_2(\cdot,\cdot)$, respectively, i.e.,
\begin{equation*}
\langle\mathcal{A}_1u,v\rangle_{V'\times V}=a_1(u,v),
\quad \langle\mathcal{A}_2u,v\rangle_{V'\times V}=a_2(u,v),
\quad u,v\in V.
\end{equation*}
>From  \cite[Theorem 1.24]{Yagi} we know that
$\mathcal{A}_1$ is a bounded operator and satisfies the following estimate
\begin{equation}\label{bounded2}
\|\mathcal{A}_1 u\|_{V'}\leq M\|u\|_{V}, \quad \forall u\in V,
\end{equation}
and $\mathcal{A}_2$
is  actually an isomorphism from $V$ to $V'$ and satisfies
\begin{equation}\label{bounded}
m\|u\|_{V}\leq \|\mathcal{A}_2 u\|_{V'}\leq M\|u\|_{V}, \quad \forall u\in V.
\end{equation}

\subsection{Existence and uniqueness of weak solutions}
In this subsection, we prove the existence and
uniqueness of the weak solutions to (\ref{Equ}).
The first auxiliary result is the following a prior estimate,
which lays the foundation for our subsequent existence and
regularity results of weak solutions
to (\ref{Equ}).
\begin{Theorem}\label{prior-estimate}
Let $f\in L^2(I;V')$, $u_0\in V$ and $u$  be the weak solution to $(\ref{Equ})$.
Under the assumption $(A1)$, we have
\bb\label{estimate}
\int^T_{0}\int_\Omega|\nabla u'|^2dxdt+\sup_{t\in \overline{I}}\|u(t)\|^2_{V}\lesssim \|f\|^2_{L^2(I;V')}+\|u_0\|^2_V,
\ee
and
\bb\label{estimate2}
\|u\|_{H^1(I;V)}\lesssim \|f\|_{L^2(I;V')}+\|u_0\|_V.
\ee

\end{Theorem}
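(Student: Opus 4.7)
The natural test function is $v=u'(t)$, not $u(t)$: since $\sigma$ is only assumed nonnegative (it may vanish on a set of positive measure), the bilinear form $a_1$ is \emph{not} coercive on $V$, so the usual parabolic test $v=u(t)$ would yield nothing. On the other hand $a_2$ is uniformly coercive thanks to $\varepsilon\geq m$ and the Poincar\'e inequality on $V=H^1_0(\Omega)$, which gives $\|v\|_V \simeq |v|_{H^1(\Omega)}$. So I would substitute $v=u'(t,\cdot)$ into (\ref{Def}), use the symmetry of $a_1$ to rewrite $a_1(u(t),u'(t))=\tfrac12\tfrac{d}{dt}a_1(u(t),u(t))$, and integrate from $0$ to $\tau\in\overline I$ to obtain
\begin{equation*}
\tfrac12 a_1(u(\tau),u(\tau)) + \int_0^\tau a_2(u'(s),u'(s))\,ds
= \tfrac12 a_1(u_0,u_0) + \int_0^\tau \langle f(s),u'(s)\rangle_{V'\times V}\,ds.
\end{equation*}

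The left-hand side is bounded below by $\tfrac12\cdot 0 + m\int_0^\tau|u'(s)|_{H^1(\Omega)}^2\,ds$ (the term $a_1(u(\tau),u(\tau))$ is nonnegative but contributes no usable coercive lower bound), and on the right I would apply Young's inequality with a small parameter $\delta>0$:
\begin{equation*}
\Bigl|\int_0^\tau\langle f,u'\rangle\,ds\Bigr|
\leq \tfrac{1}{2\delta}\|f\|_{L^2(I;V')}^2
     + \tfrac{\delta}{2}\int_0^\tau\|u'(s)\|_V^2\,ds,
\end{equation*}
together with $a_1(u_0,u_0)\leq M\|u_0\|_V^2$ from (\ref{bounded2}). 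Absorbing the $u'$-term into the left-hand side via Poincar\'e (choosing $\delta$ small relative to $m$) yields the first key bound
\begin{equation*}
\int_0^T\!\int_\Omega|\nabla u'|^2\,dx\,dt
\;\lesssim\; \|f\|_{L^2(I;V')}^2+\|u_0\|_V^2,
\end{equation*}
which, up to the Poincar\'e constant, is also a bound on $\|u'\|_{L^2(I;V)}^2$.

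The second task is to recover $\sup_{t\in\overline I}\|u(t)\|_V^2$, and here the lack of coercivity of $a_1$ forces a detour: I would not try to read it off from the energy identity but instead use that $u\in H^1(I;V)\hookrightarrow C(\overline I;V)$ and write $u(t)=u_0+\int_0^t u'(s)\,ds$ in $V$. Then Cauchy--Schwarz gives
\begin{equation*}
\|u(t)\|_V \leq \|u_0\|_V + \sqrt T\,\|u'\|_{L^2(I;V)},
\end{equation*}
so taking the supremum and squaring combines with the first estimate to give (\ref{estimate}). Finally, (\ref{estimate2}) follows by noting $\|u\|_{L^2(I;V)}^2\leq T\sup_{t\in\overline I}\|u(t)\|_V^2$ and adding $\|u'\|_{L^2(I;V)}^2$.

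The main subtlety, and the only place care is needed, is precisely the degeneracy $\sigma\geq 0$: it rules out the textbook parabolic energy argument and dictates both the choice of test function $u'$ (so that coercivity comes from $\varepsilon$) and the indirect route $u(t)=u_0+\int_0^t u'$ for controlling $u$ in $V$. A minor technical point is justifying the formal identity $a_1(u,u')=\tfrac12\frac{d}{dt}a_1(u,u)$; this is standard for $u\in H^1(I;V)$ with $\sigma\in L^\infty(\Omega)$ independent of $t$, and follows by density from the smooth case.
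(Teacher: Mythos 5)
Your proposal is correct and follows essentially the same route as the paper: test with $v=u'$, integrate the $\sigma$-term into the difference $\tfrac12\|\sqrt{\sigma}\nabla u\|^2\big|_0^T$ and drop the nonnegative terminal term, absorb via Young's inequality using the uniform coercivity from $\varepsilon\geq m$, and then recover $\sup_t\|u(t)\|_V$ and $\|u\|_{L^2(I;V)}$ through $u(t)=u_0+\int_0^t u'(s)\,ds$ (the paper's Lemma \ref{tool1} together with the embedding $H^1(I;V)\hookrightarrow C(\overline{I};V)$). Your explicit identification of the degeneracy $\sigma\geq 0$ as the reason for this choice of test function matches the paper's intent exactly.
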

\begin{proof}

Choosing $v=u'$ in  (\ref{Def}) and integrating over $(0,T)$, we obtain
\begin{equation}\label{equ2}
\int^T_{0}\int_{\Omega}\left(\sigma\nabla u(t)\cdot \nabla u'(t)+\varepsilon|\nabla u'(t)|^2\right)dxdt\leq
\int^T_{0}\|f(t)\|_{V'}\|u'(t)\|_{V}dt.
\end{equation}
>From this and the identity that
 \begin{equation*}
\int^T_{0}\int_{\Omega}\sigma \nabla u(t)\cdot \nabla u' (t)dxdt=\|\sqrt{\sigma}\nabla u(T)\|^2_{H}
-\|\sqrt{\sigma}\nabla u(0)\|^2_{H},
\end{equation*}
it follows that
\begin{equation}\label{equ2}
\int^T_{0}\int_{\Omega}\varepsilon|\nabla u'(t)|^2 dxdt\leq
\int^T_{0}\|f(t)\|_{V'}\|u'(t)\|_{V}dt+M\|u_0\|_{V}.
\end{equation}
Using Young's inequality, we have
\begin{equation*}
\|u'\|_{L^2(I;V)}\lesssim \|f\|_{L^2(I;V')}+\|u_0\|_{V}.
\end{equation*}
>From Lemma \ref{tool1} and Remark 3.2, the desired results follow immediately.
\end{proof}

With estimate (\ref{estimate2}) in hand, we can prove the first existence result of (\ref{Equ}).
\begin{Theorem}\label{existence1.5}
Let $f\in L^2(I;V')$ and $u_0\in V$. Under the assumption $(A1)$,
equation $(\ref{Equ})$
admits a unique weak solution.
\end{Theorem}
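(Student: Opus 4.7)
The plan is to obtain uniqueness immediately from the a priori estimate (\ref{estimate2}) and to obtain existence via the method of continuity (Lemma \ref{Continuity}), using the second a priori estimate once more to verify its hypothesis uniformly in the continuation parameter. For uniqueness, linearity of (\ref{Equ}) means that the difference $w=u_1-u_2$ of any two weak solutions corresponding to the same data $(f,u_0)$ is a weak solution with $f\equiv 0$ and $w(0)=0$, so applying (\ref{estimate2}) to $w$ yields $\|w\|_{H^1(I;V)}=0$, whence $u_1\equiv u_2$.

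For existence I would set $X=H^1(I;V)$ and $Y=L^2(I;V')\times V$. The trace $u\mapsto u(0)$ is a bounded map $X\to V$ by (\ref{embeding}), so for each $\lambda\in[0,1]$ the prescription
\[
L_\lambda u=\bigl(\lambda\mathcal{A}_1 u+\mathcal{A}_2 u',\; u(0)\bigr),
\]
with $\mathcal{A}_1,\mathcal{A}_2$ acting pointwise in $t$, defines a bounded linear operator $L_\lambda:X\to Y$, and $L_\lambda=(1-\lambda)L_0+\lambda L_1$ with $L_0 u=(\mathcal{A}_2 u',u(0))$ and $L_1 u=(\mathcal{A}_1 u+\mathcal{A}_2 u',u(0))$. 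The existence claim for (\ref{Equ}) is exactly the assertion that $L_1$ maps $X$ onto $Y$.

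Two items remain to apply Lemma \ref{Continuity}. The first is the uniform bound $\|u\|_X\lesssim\|L_\lambda u\|_Y$ for all $\lambda\in[0,1]$: observe that $L_\lambda u=(f,u_0)$ is exactly the weak formulation of (\ref{Equ}) with $\sigma$ replaced by $\lambda\sigma$, and since $0\le\lambda\sigma\le M$ while $\varepsilon$ is untouched, the coefficients continue to satisfy $(A1)$ with the same constants $m$ and $M$, so the proof of Theorem \ref{prior-estimate} carries over verbatim and delivers an estimate with a constant depending only on $m$, $M$ and $T$, not on $\lambda$. The second is surjectivity of $L_0$: given $(g,u_0)\in Y$, (\ref{bounded}) tells me that $\mathcal{A}_2:V\to V'$ is an isomorphism, so I may set $w(t):=\mathcal{A}_2^{-1}g(t)\in L^2(I;V)$ and then $u(t):=u_0+\int_0^t w(s)\,ds$; this $u$ lies in $H^1(I;V)$ with $u'=w$ and $u(0)=u_0$, and clearly $L_0 u=(g,u_0)$. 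With both hypotheses in hand, Lemma \ref{Continuity} yields that $L_1$ is surjective, completing the existence proof.

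The only subtle point is the uniformity of the a priori estimate in $\lambda$, which I view as the main item to check carefully; however, inspection of the proof of Theorem \ref{prior-estimate} shows that $\sigma$ enters only through the identity turning $\int\sigma\nabla u\cdot\nabla u'$ into a perfect time derivative and through a single use of the $L^\infty$ bound $M$, both of which are trivially preserved under the scaling $\sigma\mapsto\lambda\sigma$, so this obstacle dissolves on closer inspection and no further work is required.
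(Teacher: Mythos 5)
Your argument is correct, and it reaches the same conclusion by a genuinely different deployment of the method of continuity. The paper works on the closed subspace $H^1_0(I;V)=\{u\in H^1(I;V):u(0)=0\}$ with target $L^2(I;V')$, deforms \emph{both} coefficients to $1$ via $\sigma_\lambda=\lambda\sigma+(1-\lambda)\chi_\Omega$, $\varepsilon_\lambda=\lambda\varepsilon+(1-\lambda)\chi_\Omega$, and must then (i) verify surjectivity of the base operator $\mathcal{L}_0u=-\Delta u-\Delta u'$ by explicitly solving the ODE $w+w'=(-\Delta)^{-1}f$ with the formula $w(t)=\int_0^te^{-t+s}h(s)\,ds$, and (ii) treat $u_0\neq 0$ by a separate lifting step $u=w+v$ with $f^*=\mathcal{A}_1w+\mathcal{A}_2w'$. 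You instead fold the initial condition into the operator, taking $X=H^1(I;V)$, $Y=L^2(I;V')\times V$ (legitimate, since $u\mapsto u(0)$ is bounded by \eqref{embeding}), and deform only the conductivity, $\sigma\mapsto\lambda\sigma$, leaving $\varepsilon$ fixed. This buys two simplifications: the uniform estimate is immediate because $0\le\lambda\sigma\le M$ keeps $(A1)$ with the \emph{same} constants $m,M$ (the paper needs the slightly adjusted $m'=\min\{m,1\}$, $M'=\max\{M,1\}$), and surjectivity of $L_0u=(\mathcal{A}_2u',u(0))$ reduces to inverting the isomorphism $\mathcal{A}_2:V\to V'$ of \eqref{bounded} and integrating in time -- no exponential formula and no separate lifting for $u_0\neq 0$. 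Your observation that the proof of Theorem \ref{prior-estimate} uses $\sigma$ only through $\sigma\ge 0$ (to discard $\|\sqrt{\sigma}\nabla u(T)\|_H^2$) and $\sigma\le M$ (to bound the initial term) is exactly the point that makes the $\lambda$-uniformity automatic. The one presentational caveat is that Theorem \ref{prior-estimate} is stated for weak solutions of \eqref{Equ} itself, so, as you do, one should say explicitly that its proof applies verbatim to the $\lambda$-modified weak formulation for every $u\in X$ with data $(f,u_0):=L_\lambda u$; with that said, the argument is complete.
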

\begin{proof}
We first establish the results for $u_0=0$. 
The uniqueness is nothing but a direct consequence of Theorem \ref{prior-estimate}.
We use Lemma \ref{Continuity} to prove the existence. First, we construct a linear operator
$\mathcal{L}:H^1_0(I;V)\rightarrow L^2(I;V')$ by setting
$$
(\mathcal{L}u)(t):=\mathcal{A}_1u(t)+\mathcal{A}_2u'(t),   \quad \forall u\in H^1_0(I;V),
$$
where $H^1_0(I;V)$ is defined by
$$
H^1_0(I;V)=\{u\in H^1(I;V);u(0)=0\}.
$$
It is a closed subspace of the Banach space $H^1(I;V)$, since $H^1(I;V)\Subset C(\overline{I};V)$ continuously.
>From  (\ref{bounded2}) and (\ref{bounded}) it follows that
$$
\|\mathcal{L}u\|_{L^2(I,V')}\leq M\|u\|_{H^1(I;V)},
$$
which implies that
$\mathcal{L}$ is well-defined and continuous.

For each $\lambda\in[0,1]$, we introduce a linear operator $\mathcal{L}_\lambda: H^1_0(I;V)\to L^2(I;V')$ as follows:
$$
\mathcal{L}_\lambda u:=\lambda \mathcal{L}u+(1-\lambda)\mathcal{L}_0u ,  \quad \forall  u\in H^1_0(I;V),
$$
where we set $\mathcal{L}_0u=-\Delta u-\Delta u'$  for $u\in H^1_0(I;V)$.
Here $(-\Delta)$ is seen as an operator from $V$ to
$V'$ (cf. \cite[Theorem 2.2]{Yagi}). More precisely, it is the
operator associated with the bilinear form $a(\cdot,\cdot):V\times V\to \R$, defined by
$$
a(u,v):=\int_{\Omega}\nabla u\cdot \nabla v dx,  \quad \forall u,v\in V,
$$
in a way such that $\langle(-\Delta)u,v\rangle_{V'\times V}= a(u,v)$ for all $u,v\in V$.
In addition, $(-\Delta):V\to V'$ is an isomorphism.

Let $\sigma_\lambda =\lambda\sigma +(1-\lambda)\chi_{\Omega}$ and
$\varepsilon_\lambda=\lambda\varepsilon+ (1-\lambda)\chi_{\Omega}$
for $\lambda\in[0,1]$. Then the functions $\sigma_\lambda$ and $\varepsilon_\lambda$ satisfy
$$
m':=\min\{m,1\}\leq \varepsilon_\lambda(x)\leq M':=\max\{M,1\}
\quad \mbox{for a.e. } x \in \Omega,
$$
and
$$
0 \leq \sigma_\lambda(x)\leq M'
\quad \mbox{for a.e. } x \in \Omega.
$$
Then, for $f\in L^2(I;V')$, if $\mathcal{L}_\lambda u=f$
for some $u\in H^1_0(I;V')$, then $u(0)=0$ and for a.e. $t\in I$,
$u$ satisfies the following weak formulation:
\begin{equation}\label{Equ2}
\begin{array}{rlll}
\displaystyle \int_{\Omega}\left(\sigma_{\lambda}\nabla u(t)\cdot \nabla v
+\varepsilon_{\lambda}\nabla u'(t)\cdot \nabla v\right)
dx=\langle f(t),v\rangle_{V'\times V}, \quad \forall v\in V.
\end{array}
\end{equation}
Thus, an application of Theorem \ref{prior-estimate} yields that there exists a positive constant $C$,
depending only on $m'$, $M'$ and $T$, such that
$$
\|u\|_{H^1(I;V)}\leq C\|L_\lambda u\|_{L^2(I,V')}.
$$
In view of Lemma \ref{Continuity}, it remains
to prove that the mapping $\mathcal{L}_0:H^1_0(I;V)\to L^2(I;V')$ is onto.
To this end, for an arbitrary $f\in L^2(I;V')$, we need to construct a function $w\in H^1_0(I;V)$
such that for a.e. $t\in I$,
\begin{equation}\label{Equ3}
\begin{array}{rlll}
a(w(t),v)+a(w'(t),v)=\langle f(t),v\rangle_{V'\times V}, \quad \forall v\in V.
\end{array}
\end{equation}
Let $w(t)=\int^t_{0}e^{-t+s}h(s)ds$ for $t\in I$, where $h(s)=(-\Delta)^{-1}f(s)$ for $s\in I$.
Since $(-\Delta)^{-1}:V'\rightarrow V$ is
bounded, we have that $h\in L^2(I;V)$ and hence $w\in H^1(I;V)$. Moreover,
a direct computation yields $w(0)=0$ and $w'(t)+w(t)=h(t)$ for $t\in I$, which ensures that
$w$ satisfies (\ref{Equ3}). Therefore,
we can conclude that the method of continuity applies and Theorem~\ref{existence1.5} holds for $u_0=0$. 

For $u_0\ne 0$, we let $w\in H^1(I;V)$ such that $w(0)=u_0$ and write
$f^*=\mathcal{A}_1w+\mathcal{A}_2 w'$. Clearly, $f^*\in L^2(I;V')$.
Then the proof above for $u_0=0$ confirms 
the existence of a unique function $v\in H^1(I;V)$ such that
$v(0)=0$ and
\begin{equation*}
\begin{array}{rlll}
\mathcal{A}_1v+\mathcal{A}_2v'=f-f^*.
\end{array}
\end{equation*}
Therefore, the function $u:=w+v$ is the desired weak solution.
\end{proof}


\subsection{Regularity of the solutions to the interface problem}
In this subsection we consider the regularity of the weak solution for (\ref{Equ}), which is important
not only for its theoretical interest but also for the subsequent numerical analysis.
Of our prime concern in this paper is the case when the coefficients $\sigma(x)$ and $\varepsilon(x)$ are discontinuous, 
and the conductivity distribution $\sigma(x)$ in (\ref{Equ}) is unnecessary to be bounded 
below strictly positively.  This feature is common to biological applications. 
Due to the (possibly sharp) jumps of $\sigma(x)$ and $\varepsilon(x)$ across the medium interface, 
the solution to (\ref{Equ}) does not expect a desired global regularity like $H^2(\Om)$, but it is shown 
in this section that this $H^2$-regularity is true locally in each medium region $\Om_i$ for $i=1$, $2$. And 
such local $H^2$-regularity 
is proved sufficient in the next section for us to establish the desired optimal convergence order for finite element approximations. 
As we are not aware of proofs of such local regularities for time-dependent PDEs with large jumps in coefficients 
in literature, even for standard parabolic equations, we will present a rigorous proof here 
for the non-standard time-dependent PDE  (\ref{Equ}). 
%
%
We start with the introduction of some standard assumptions.
\bn
\item[$(A2)$] $\Omega$ consists of  two $C^2$-subdomains $\Omega_1$ and $\Omega_2$ with
$\Omega_1\Subset \Omega$, $\Omega_2:=\Omega\backslash \overline{\Omega}_1$;

\item[$(A3)$] $\varepsilon_i:=\varepsilon|_{{\Omega}_i}$ and
$\sigma_i:=\sigma|_{{\Omega}_i}$ are continuously differentiable in
$\overline{\Omega}_i$ ($i=1,2$).

\en

The interface problem (\ref{Equ}) is often complemented with the following physical 
interface conditions:
\begin{equation}\label{jump}
[u(t)]=0 \quad {\text on} \ I\times \Gamma,
\quad [\sigma \frac{\partial u(t)}{\partial \nu}+\varepsilon
\frac{\partial u'(t)}{\partial \nu}]=0 \quad {\text on} \ I\times\Gamma,
\end{equation}
where $\Gamma:=\p\Omega_1$ is the interface, 
and $[u(t)]:=u_1|_{\Gamma}-u_2|_{\Gamma}$, 
$[\sigma \frac{\partial u(t)}{\partial \nu}+\varepsilon
\frac{\partial u'(t)}{\partial \nu}]:=\sigma_1\f{\p u_1(t)}{\p \nu_1}
+\sigma_2\f{\p u_2(t)}{\p \nu_2}+\varepsilon_1\f{\p u'_1(t)}{\p \nu_1}+\varepsilon_2\f{\p u'_2(t)}{\p \nu_2}$
on $\Gamma$. Here $u_i$ stand for the restrictions of $u$ to $\Omega_i$, and ${\p }/{\p \nu_i}$
denotes the outer normal derivative with respect to $\Omega_i$, $i=1,2$.
To deal with
the interface problem, we introduce a Banach space
$$
\mathcal{Y}=\{u\in V;
u_i\in H^2(\Omega_i), \ i=1,2\}
$$
with the norm
$$
\|u\|_{\mathcal{Y}}=\|u\|_V+\|u_1\|_{H^2(\Omega_1)}+\|u_2\|_{H^2(\Omega_2)}, 
\quad \forall u\in\mathcal{Y}.
$$

\begin{Definition}
Let $f\in L^2(I;H)$ and $u_0\in \mathcal{Y}$.
A function
$u\in H^1(I;\mathcal{Y})$ is called a strong solution of $(\ref{Equ})$ with the jump conditions $(\ref{jump})$
if $u(0)=u_0$ and
the relation
\begin{equation}\label{Def3}
-\nabla\cdot\left(\sigma(x)\nabla u(t,x)+\varepsilon(x)\nabla u'(t,x)\right)=f(t,x)
\end{equation}
holds for a.e. $t\in I$ and a.e. $x\in\Omega_i$ $(i=1,2)$.
\end{Definition}
Before proving the existence of a strong solution to the interface problem, we first establish
the following result.
\begin{Lemma}\label{necessary}
Let $u$ be the weak solution of $(\ref{Def})$. Assume that
 $f\in L^2(I;H)$, $u_0\in \mathcal{Y}$,
 $u\in H^1(I;\mathcal{Y})$, $\p\Omega_1$ and
$\p\Omega_2$ are Lipschitz continuous. Then $u$ is a strong solution for
$(\ref{Equ})$ and $(\ref{jump})$.
\end{Lemma}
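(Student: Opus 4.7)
The plan is to derive both the pointwise PDE in each subdomain and the two jump conditions directly from the weak formulation (\ref{Def}), exploiting the extra regularity hypothesis $u\in H^1(I;\mathcal{Y})$ together with the assumptions $(A2)$, $(A3)$ and the Lipschitz regularity of $\partial\Omega_i$. Throughout, we fix a null set $N\subset I$ outside of which $u(t),u'(t)\in\mathcal{Y}$, $f(t)\in H$, and (\ref{Def}) holds; everything below is understood for $t\in I\setminus N$.

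First I would establish the PDE inside each $\Omega_i$. For any $\varphi\in C^\infty_0(\Omega_i)$, extend $\varphi$ by zero to obtain a test function in $V$. Plugging it into (\ref{Def}), the integrals reduce to $\Omega_i$. Since $u_i(t),u'_i(t)\in H^2(\Omega_i)$ and $\sigma_i,\varepsilon_i\in C^1(\overline{\Omega}_i)$ by $(A3)$, I can integrate by parts and move all derivatives off $\varphi$, obtaining
\begin{equation*}
\int_{\Omega_i}\bigl[-\nabla\cdot(\sigma_i\nabla u_i(t))-\nabla\cdot(\varepsilon_i\nabla u'_i(t))-f(t)\bigr]\varphi\,dx=0.
\end{equation*}
Since this holds for all $\varphi\in C^\infty_0(\Omega_i)$, the fundamental lemma of the calculus of variations yields (\ref{Def3}) pointwise a.e.\ in $\Omega_i$ for $i=1,2$.

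Next I would extract the interface conditions. The Dirichlet condition $[u(t)]=0$ on $\Gamma$ is automatic: since $u(t)\in V\subset H^1(\Omega)$, its traces from $\Omega_1$ and $\Omega_2$ onto $\Gamma$ coincide in $H^{1/2}(\Gamma)$. For the flux jump, take an arbitrary $v\in V$ and split each of $a_1(u(t),v)$ and $a_2(u'(t),v)$ as a sum of integrals over $\Omega_1$ and $\Omega_2$. Because each $\partial\Omega_i$ is Lipschitz and $u_i(t),u'_i(t)\in H^2(\Omega_i)$, the divergence theorem applies in each subdomain; integrating by parts and using the pointwise PDE just derived to cancel the volume contributions against $\langle f(t),v\rangle_{V'\times V}=(f(t),v)$, the only surviving terms are the boundary integrals. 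Since $v=0$ on $\partial\Omega$, those on $\partial\Omega\cap\partial\Omega_2$ drop out, leaving
\begin{equation*}
\int_{\Gamma} v\left[\sigma\tfrac{\partial u(t)}{\partial\nu}+\varepsilon\tfrac{\partial u'(t)}{\partial\nu}\right]dS=0,
\end{equation*}
where the bracket is the sum of the outward normal fluxes from $\Omega_1$ and $\Omega_2$. The trace map $v\mapsto v|_\Gamma$ from $\{v\in V\}$ onto $H^{1/2}(\Gamma)$ is surjective (any $g\in H^{1/2}(\Gamma)$ can be lifted to $H^1(\Omega_1)$ and extended by zero outside a neighborhood of $\Gamma$ while staying in $V$), hence the bracketed normal flux vanishes as an element of $H^{-1/2}(\Gamma)$, i.e.\ (\ref{jump}) holds.

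The main obstacle I anticipate is the last step, namely rigorously passing from ``the boundary integral vanishes for all $v\in V$'' to ``the normal flux jump is zero on $\Gamma$.'' This requires the surjectivity of the trace $V\to H^{1/2}(\Gamma)$ and the fact that $\sigma_i\partial_\nu u_i+\varepsilon_i\partial_\nu u'_i\in H^{1/2}(\partial\Omega_i)$, which follows from $u_i(t),u'_i(t)\in H^2(\Omega_i)$ and the $C^1$ regularity of the coefficients up to $\Gamma$; the Lipschitz regularity of $\Gamma$ is exactly what is needed for both the trace theorem and the surjectivity argument. Apart from this, the argument is essentially a standard localization of (\ref{Def}), carried out inside subdomains rather than in all of $\Omega$, with the time variable frozen on the full-measure set $I\setminus N$.
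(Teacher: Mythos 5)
Your proposal is correct and follows essentially the same route as the paper: localize (\ref{Def}) with test functions supported in each $\Omega_i$ to get the pointwise PDE, then integrate by parts over $\Omega_1\cup\Omega_2$ against general $v\in V$ and cancel the volume terms to isolate the interface integral, with the Dirichlet jump coming for free from $u(t)\in H^1(\Omega)$. Your added justification of the final step via surjectivity of the trace map $V\to H^{1/2}(\Gamma)$ and the fact that the flux jump lies in $L^2(\Gamma)$ merely makes explicit what the paper compresses into ``the arbitrariness of $v$.''
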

\begin{proof}
 We obtain, upon integration by parts, that for a.e. $t\in I$,
\beqn
\int_{\Omega_i}\left(-\nabla\cdot\left(\sigma \nabla u+\varepsilon \nabla u' \right)v-fv\right)dx=
\int_{\Omega_i}\left(\sigma\nabla u\cdot \nabla v+\varepsilon \nabla u\cdot \nabla v-fv\right)dx,  \quad \forall v\in H^1_0(\Omega_i),
\eqn
 which implies that
\begin{equation*}
-\nabla\cdot\left(\sigma(x)\nabla u(t,x)+\varepsilon(x)\nabla u'(t,x)\right)=f(t,x) \
\end{equation*}
holds for a.e. $t\in I$ and a.e. $x\in\Omega_i$ $(i=1,2)$.
It remains to show that the weak solution also satisfies the jump conditions $(\ref{jump})$. By integration by parts
we have for a.e. $t\in I$,
\begin{equation*}
\begin{array}{l}
\displaystyle 0=\int_{\Omega_1\cup \Omega_2 }\left(-\nabla\cdot\left(\sigma \nabla u+\varepsilon \nabla u'\right)v
-fv\right)dx\\
\quad\displaystyle =\int_{\Omega}
\left(\sigma\nabla u\cdot \nabla v+\varepsilon \nabla u\cdot \nabla v-fv\right)dx
-\int_{\Gamma}[\sigma \frac{\partial u}{\partial \nu}+\varepsilon  \frac{\partial u'}{\partial \nu}]vdx,   \quad \forall v\in V.
\end{array}
\end{equation*}
>From this and the definition of weak solutions  it follows that
$$
\int_{\Gamma}[\sigma \frac{\partial u}{\partial \nu}+\varepsilon  \frac{\partial u'}{\partial \nu}]vdx=0,  \q \forall\,v\in V.
$$
The arbitrariness of $v$ shows that $u$ satisfies the second jump condition in (\ref{jump}).
The first condition in  (\ref{jump}) is
a direct consequence of the fact that $u\in H^1(I;V)$. This completes the proof.
\end{proof}

>From the lemma above, we know that the key point is
to get the regularity of the weak solutions, which is the main subject
of the following theorem.

\begin{Theorem}\label{Regularity}
Let $f\in L^2(I;H)$ and
$u_0\in \mathcal{Y}$.
Under the assumptions
$(A1)$, $(A2)$ and $(A3)$, the interface problem $(\ref{Equ})$ and
$(\ref{jump})$
admits a unique strong solution $u$, which satisfies
\begin{equation*}
\|u\|_{H^1(I;\mathcal{Y})}\lesssim \|f\|_{L^2(I;H)}+\|u_0\|_{\mathcal{Y}}.
\end{equation*}
\end{Theorem}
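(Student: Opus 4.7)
The plan is to lift the weak solution guaranteed by Theorem \ref{existence1.5} to a strong solution of the interface problem by establishing the piecewise $H^2$-regularity together with the quantitative estimate. By Theorem \ref{existence1.5}, (\ref{Equ}) admits a unique weak solution $u\in H^1(I;V)$ with $\|u\|_{H^1(I;V)}\lesssim \|f\|_{L^2(I;V')}+\|u_0\|_V$. Once I establish $u\in H^1(I;\mathcal{Y})$ with the announced bound, Lemma \ref{necessary} shows that $u$ is a strong solution of (\ref{Equ})-(\ref{jump}), and uniqueness is inherited from that of the underlying weak solution. The whole argument therefore reduces to an interior-and-boundary $H^2$-regularity lift carried out separately in $\Omega_1$ and $\Omega_2$.

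To obtain the tangential second derivatives in each $\Omega_i$ I will use a Nirenberg difference-quotient argument adapted to the coupled system (\ref{Equ}). Cover $\overline{\Omega}_i$ by finitely many charts, flattening $\p\Omega_i$ via a $C^2$-diffeomorphism in charts that touch $\p\Omega_i$ (legitimate by (A2)); choose a cutoff $\eta\in C^\infty_0$ supported in one chart and a coordinate index $k$, running over all indices in interior charts but only over tangential indices otherwise, so that $D_k^h$ never crosses $\Gamma$ or $\p\Omega$. Plug the test function $v(t):=-D_k^{-h}(\eta^2 D_k^h u'(t))\in V$, well-defined for $|h|$ smaller than half the distance from $\mathrm{supp}\,\eta$ to the chart boundary, into (\ref{Def}) and integrate over $I$. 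Discrete integration by parts in $D_k^h$ transforms the $a_1$-contribution into $\tfrac12\tfrac{d}{dt}\int \sigma(\cdot+he_k)\eta^2|D_k^h\nabla u|^2 dx$ plus commutator and cutoff remainders, and the $a_2$-contribution into the coercive quantity $\int_I\!\!\int \varepsilon(\cdot+he_k)\eta^2|D_k^h\nabla u'|^2 dxdt$. The commutator terms generated by $D_k^h$ acting on $\sigma$ and $\varepsilon$ are uniformly bounded in $h$ by (A3); the $\nabla\eta$-remainders are absorbed by Young's inequality and (\ref{estimate2}); and the source pairing $\int_I(f(t),v(t))dt$ is handled by Cauchy--Schwarz after bounding $\|v\|_{L^2(I;H)}$ via Lemma \ref{DQ1} by $\|u'\|_{L^2(I;V)}$ plus the coercive quantity, the latter absorbed by Young. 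The initial boundary-in-time contribution is controlled by $\|u_0\|_{\mathcal{Y}}$ since $u_0|_{\Omega_i}\in H^2(\Omega_i)$. Using $\varepsilon\geq m>0$ gives a bound on $\|\eta D_k^h\nabla u'\|_{L^2(I;H)}$ and on $\sup_{t}\|\eta D_k^h\nabla u(t)\|_{H}$ uniform in $h$; Lemmas \ref{DQ3} and \ref{tool1} then promote these to $D_{k,j}u,\,D_{k,j}u'\in L^2(I;L^2(\Omega_i))$ for all admissible $k$ and all $j$.

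To recover the missing normal-normal second derivative in boundary and interface charts I exploit the strong form (\ref{Def3}) directly. Writing the equation in the flattened coordinates and isolating the highest-order normal-normal term gives, inside each $\Omega_i$, the pointwise identity
\begin{equation*}
\sigma(x)\,\p_n^2 u(t,x)+\varepsilon(x)\,\p_n^2 u'(t,x)=g(t,x),\q (t,x)\in I\times\Omega_i,
\end{equation*}
where $g$ is a linear combination of $f$, first-order derivatives of $u$ and $u'$, and the already-controlled tangential second derivatives, and belongs to $L^2(I;L^2(\Omega_i))$ with the right bound by virtue of (A3) and the previous step. For a.e.\ $x\in\Omega_i$ this is a linear first-order ODE in $t$ for $z(t,x):=\p_n^2 u(t,x)$, with bounded coefficient $\sigma(x)/\varepsilon(x)$ (legitimate because $\varepsilon\geq m>0$) and initial datum $z(0,x)=\p_n^2 u_0(x)\in L^2(\Omega_i)$; Duhamel's formula yields $z\in L^\infty(I;L^2(\Omega_i))$ and $z'\in L^2(I;L^2(\Omega_i))$ with the announced quantitative bound. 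Assembling all local estimates through a finite partition of unity on $\overline{\Omega}_1\cup\overline{\Omega}_2$ yields $u\in H^1(I;\mathcal{Y})$ with $\|u\|_{H^1(I;\mathcal{Y})}\lesssim \|f\|_{L^2(I;H)}+\|u_0\|_{\mathcal{Y}}$, and Lemma \ref{necessary} closes the proof. The hard part I expect is the balance in the difference-quotient identity between the \emph{time}-boundary term generated by $a_1$ and the \emph{spatial} coercivity generated by $a_2$: in contrast to the purely parabolic case, the coercivity measures $u'$ whereas the time-boundary term measures $u$, so the estimate can only be closed by invoking (\ref{estimate2}) for the intermediate terms and by keeping all cutoffs strictly on one side of $\Gamma$ so that the possibly large jumps of $\sigma$ and $\varepsilon$ never enter the commutator bounds.
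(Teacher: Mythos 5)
Your proposal follows essentially the same route as the paper's proof: reduction to piecewise $H^2$-regularity via Lemma \ref{necessary} and Theorem \ref{existence1.5}, the Nirenberg difference-quotient argument with the test function $-D_k^{-h}(\eta^2 D_k^h u')$ (interior charts in all directions, tangential directions only after flattening near $\p\Omega_i$), and recovery of the normal-normal second derivative by solving a first-order ODE in time in which $\varepsilon\geq m$ plays the role of the leading coefficient --- your Duhamel formula is exactly the paper's integrating factor $H=\exp(\hat{\sigma}_{nn}t/\hat{\varepsilon}_{nn})$. The only points to tighten are that you write the pointwise identity for $\p_n^2 u$ before its existence is established (the paper avoids this by applying the integrating factor to the first derivative $D_n w$, obtaining $D_n g'\in L^2$, and only then concluding that $D_{n,n}w$ exists), and that the terminal-time boundary term produced by $a_1$ is discarded using the sign condition $\sigma\geq 0$ rather than controlled through (\ref{estimate2}).
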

\begin{proof}
>From Corollary \ref{existence1.5}, there exists a weak solution $u\in H^1(I;V)$ to (\ref{Equ}).
In view of Lemma \ref{necessary} and Theorem \ref{prior-estimate}, it suffices to show that $u\in H^1(I;\mathcal{Y})$
and
$$
\|u\|_{H^1(I;\mathcal{Y})}\lesssim \|u\|_{H^1(I;V)}+\|f\|_{L^2(I;H)}+\|u_0\|_{\mathcal{Y}}.
$$

The proof is divided into two parts. We only show that $u|_{\Omega_1}\in H^2(\Omega_1)$,
since the result that $u|_{\Omega_2}\in H^2(\Omega_2)$ can be proven in the same way.
Henceforth we denote by $C$ a generic constant that depends only on the cut-off functions, the final observation time
$T$ and the coefficients $\varepsilon$ and $\sigma$, and is always independent of the size of 
the difference parameter $h$ in \eqref{DQ}.

We first establish the interior regularity of the solution and its desired  estimate.
Let $U\Subset \Omega_1$ and choose a domain $W$ such that $U\Subset W\Subset \Omega_1$.
We then select a cut-off
function $\eta\in C^\infty_0(W)$ such that $\eta\equiv 1$ on $U$ and vanishes outside of $W$.
Now let $|h|>0$ be small, and $e_k$ be the unit coordinate vector
in $x_k$ direction for $k\in \{1,\cdots,n\}$, and define a function $v=-D_k^{-h}(\eta^2D_k^hu')$
(see \eqref{DQ} for the definition of $D^h_k$).
Clearly, we know $v(t)\in H^1_0(\Omega_1)$, hence also to $V$ for $t\in I$.  Now, 
letting $\sigma^h(x)=\sigma(x+he_k)$, $\varepsilon^h(x)=\varepsilon(x+he_k)$ for $x\in W$, substituting 
this $v$ 
into the left-hand side of (\ref{Def}), and integrating it over $I$,  we find that
\begin{equation*}
\begin{array}{l}
A:=\displaystyle\int_I\left( a_1(u(t),v(t))+a_2(u'(t),v(t))\right) dt \\[3mm]
\quad= \displaystyle \int_I \int_{\Omega} \left( D_k^{h}\left(\sigma \nabla u(t)\right) \cdot \nabla \left(\eta^2D_k^hu'(t)\right) +
 D_k^{h}\left(\varepsilon \nabla u'(t)\right) \cdot \nabla \left(\eta^2D_k^hu'(t)\right) \right) dxdt\\[3mm]
\quad =\displaystyle \int_I\int_{W} \left(\varepsilon^h \eta^2 \nabla D_k^{h} u'(t)\cdot \nabla  D_k^hu'(t)
+\sigma^h \eta^2 \nabla D_k^{h} u'(t)\cdot \nabla  D_k^hu(t) \right)dxdt\\[3mm]
\quad\quad  \displaystyle +\int_I\int_{W} \left(  2 \eta D_k^h\varepsilon D_k^hu'(t) \nabla u'(t)\cdot \nabla \eta
+\eta^2 D_k^h\varepsilon \nabla u'(t)\cdot \nabla D_k^hu'(t)+2\varepsilon^h \eta D_k^hu'(t) \nabla D_k^hu'(t)\cdot \nabla \eta \right)dxdt\\[3mm]
\quad\quad \displaystyle +\int_I\int_{W} \left(  2 \eta D_k^h\sigma D_k^hu'(t) \nabla u(t)\cdot \nabla \eta
+\eta^2 D_k^h \sigma \nabla u(t)\cdot \nabla D_k^hu'(t)+2\sigma^h \eta D_k^hu'(t) \nabla D_k^hu(t)\cdot \nabla \eta \right)dxdt\\
\quad=: {\bf( J)_1}+{\bf( J)_2}+{\bf( J)_3}\,.
\end{array}
\end{equation*}

We now estimate ${\bf( J)_1}$, ${\bf( J)_2}$, and $ {\bf( J)_3}$ one by one.  
It is easy to see that
\bb\label{J1}
{\bf( J)_1} =\f{1}{2}\int_{W}\sigma^h \eta^2 |\nabla  D^{h}_k u(T)|^2dx- \f{1}{2}\int_{W}\sigma^h \eta^2 |\nabla  D^{h}_k u(0)|^2dx
+\int_I\int_W\varepsilon^h\eta^2
|\nabla  D^{h}_k u'(t)|^2dxdt.
\ee
We note that there exists a constant $K>0$ such that $|D_k^h\sigma(x)|\leq K$ and
$|D_k^h\varepsilon(x)|\leq K$ for all $x\in W$ and $0<|h|<\f{1}{2} \mathrm{dist}(W,\p\Omega_1)$.
Using Young's inequality and Lemma \ref{DQ1}, we obtain
\bb\label{J2}
|{\bf( J)_2}|\leq\frac{m}{5}\int_I\int_W \eta^2|\nabla D_k^hu'(t)|^2dxdt+C\int_I\int_\Omega |\nabla u'(t)|^2dxdt.
\ee
Similarly, we can derive 
\bb\label{J30}
\begin{array}{l}
\displaystyle |{\bf( J)_3}|\leq \frac{m}{5}\int_I\int_W \eta^2|\nabla D_k^hu'(t)|^2dxdt+ \delta\int_I\int_W \eta^2|\nabla D_k^hu(t)|^2dxdt\\
\displaystyle \quad\quad\quad\quad +C\int_I\int_\Omega\left( |\nabla u(t)|^2+|\nabla u'(t)|^2\right)dxdt, 
\end{array}
\ee
where $\delta$ is a positive constant to be specified later. 
An interplay of Lemmas \ref{DQ1} and \ref{tool1} implies that
$$
\int_{W}\eta|\nabla D_k^hu(t)|^2dx\leq C'(\int_{\Omega}|\nabla D_k u(0)|^2dx+\int_I \int_{\Omega}|\nabla D_k u'(s)|^2dxds),  \quad \forall\,t\in I, 
$$
with some constant $C'>0$, 
whence (\ref{J30}) ensures that
\bb\label{J3}
\begin{array}{l}
\displaystyle |{\bf( J)_3}|\leq \frac{2m}{5}\int_I\int_W \eta^2|\nabla D_k^hu'(t)|^2dxdt
+C\int_I\int_\Omega\left( |\nabla u(t)|^2+|\nabla u'(t)|^2\right)dxdt,
\end{array}
\ee
if $\delta$ is chosen small enough, say $\delta=m/(5TC')$.

On the other hand, using  Young's inequality and Lemma \ref{DQ1} again,
we deduce 
\beqn\label{RSD}
B&:=&\int_I\int_\Omega f(t)v(t)dxdt \notag\\
&\leq &\frac{m}{5}\int_I\int_W \eta^2|\nabla D_k^hu'(t)|^2dxdt+C
\left(\int_I\int_\Omega \left(|f(t)|^2+|\nabla u'(t)|^2\right)dxdt+\|u_0\|_{\mathcal{Y}}\right).\notag
\eqn
Since $A=B$, we combine (\ref{J1}) with (\ref{RSD}) to get 
\begin{equation}\label{R-inequ1}
\begin{array}{rlll}
 &&\displaystyle\f{2m}{5}\int_I\int_W\eta^2
|\nabla  D^{h}_k u'(t)|^2dxdt\\
&\leq&  \displaystyle \frac{m}{5}\int_I\int_W \eta^2|\nabla D_k^hu'(t)|^2dxdt+C(\int_I\int_\Omega \left(|\nabla u(t)|^2+|\nabla u'(t)|^2+|f(t)|^2\right)dxdt+\|u_0\|_{\mathcal{Y}}), 
\end{array}
\end{equation}
which implies 
\beqnx
\sum^n_{i=1}\int_I \|D^h_k D_i u'(t)\|^2_{L^2(U)}
 dt \displaystyle \lesssim  \int_I\left(\|u'(t)\|^2_{V}+\|u(t)\|^2_{V}+\|f(t)\|^2_H \right)dt+\|u_0\|^2_{\mathcal{Y}} 
\eqnx
for all $k=1,2,\cdots, n$ and sufficiently small $|h|\neq 0$.
By applying Lemmas \ref{DQ3}  and  \ref{tool1}, we come to 
\bb\label{R-interior}
\|w\|_{H^1(I;H^2(U))}\lesssim \|f\|_{L^2(I;H)}+\|u\|_{H^1(I;V)}+\|u_0\|_{\mathcal{Y}}.
\ee

Next, we establish the boundary regularity and the desired  estimate.
We first use the standard argument to straighten out the boundary, i.e., flatting
out the boundary by changing the coordinates near a boundary point (cf. \cite[Chap.\,6.2]{Gilbarg}).
Given $x_0\in\p\Omega_1$, there exists a ball $B=B_r(x_0)$ with radius $r$
and a $C^2$-diffeormorphism $\Psi:B \to\Psi(B)\subset\R^n$ such that det$|\nabla \Psi|=1$,
$U':=\Psi(B)$ is an open set,
$\Psi(B\cap\Omega_1)\subset \R_{+}^n$
and $\Psi(B\cap\p\Omega_1)\subset \p\R_{+}^n$, where
$\R_{+}^n$ is the half-space in the new coordinates.
Henceforth we write $y=\Psi(x)=(\Psi_1(x),\cdots,\Psi_n(x))$ for
$x\in B$. Then we have $\{y_n>0; y\in U'\}=\Psi(B\cap\Omega_1)$.
Let $\Phi=\Psi^{-1}$, $B^+=B_{\f{r}{2}}(x_0)\cap\Omega_1$,
$G=\Psi(B_{\f{r}{2}}(x_0))$ and
$G^+=\Psi(B^+)$, then we can see $G\Subset U'$ and $G^+\subset G$.
We shall write $D_iw={\p w}/{\p y_i}$ for $i=1,\cdots, n$, and 
$
w(y)=u(\Phi(y))
$, 
$
\hat{f}(y)=f(\Phi(y))
$
for 
$y\in U'$. 
Now using the transformation function $\Psi$, the original equation on $I\times B$ can be transformed into
an equation of the same form on $I\times U'$, i.e., for a.e. $t\in I$,
\bb\label{T-equ}
\int_{U'}\big(\sum^n_{i,j=1}{\hat{\sigma}_{ij}}D_i w(t) D_j v+\sum^n_{i,j=1}\hat{\varepsilon}_{ij}
D_i w'(t) D_j v \big)dy
=\int_{U'}\hat{f}(t) v dy, \quad \forall v\in H^1_0(U'),
\ee
where the coefficients $\hat{\sigma}_{ij}(y)$ and $\hat{\varepsilon}_{ij}(y)$ are given by 
\bb\label{general-Equ}
\hat{\sigma}_{ij}(y):=\sum^n_{r=1}\sigma(\Phi(y)) \f{\p \Psi_i}{\p x_r} (\Phi(y)) \f{\p \Psi_j}{\p x_r}(\Phi(y)), \
\hat{\varepsilon}_{ij}(y):=\sum^n_{r=1}\varepsilon (\Phi(y))\f{\p \Psi_i}{\p x_r} (\Phi(y)) \f{\p \Psi_j}{\p x_r}(\Phi(y))
\ee
for $1\leq i,j\leq n$ and $y\in U'$.
It is not difficult to see that
$$
\sum^n_{i,j=1}\hat{\varepsilon}_{ij}(y)\xi_i\xi_j\geq m|\xi|^2,
\quad\quad \sum^n_{i,j=1}\hat{\sigma}_{ij}(y)\xi_i\xi_j\geq 0, \quad \forall \ (y, \xi)\in U'\times\R^n,
$$

Choosing a domain $W'$ such that $G\Subset W'\Subset U'$,
we then select a cut-off
function, which is still denoted by $\eta$, such that $\eta\equiv 1$ on $G$ and vanishes outside $W'$.
Now let $|h|>0$ be small, and $\hat{e}_k$
be the unit coordinate vector in the $y_k$ direction for $k\in \{1,\cdots,n-1\}$. In the sequel,
$D^h_k$ stands for the difference quotient in the direction $\hat{e}_k$.
We observe that there exists a constant $K'>0$ such that
$|D_k^h\hat{\sigma}_{i,j}(y)|\leq K'$ and $|D_k^h\hat{\varepsilon}_{i,j}(y)|\leq K'$
for a.e. $y\in W'$, all $0<|h|<\f{1}{2} \mathrm{dist}(W', \p U')$ and $1\leq i,j\leq n$.
Then, a natural variant of the reasoning leading to (\ref{R-inequ1})
shows that
\begin{equation*}
\begin{array}{rlll}
&&\displaystyle \frac{2m}{5}\int_I\int_{W'}\left(\sum^n_{i=1}\eta^2(D_k^h D_i  w'(t))^2\right)dydt \\
&\leq& \displaystyle \frac{m}{5}\int_I\int_{W'}\left(\sum^n_{i=1}\eta^2(D_k^h D_i  w'(t))^2 \right)dydt +C\int_I(\| w(t)\|^2_{H^1(U')}+\|w'(t)\|^2_{H^1(U')}
+\|\hat{f}(t)\|_{L^2(U')})dt\\
&&\displaystyle+C\|w(0)\|_{H^2(U'_{-}\cup U'_{+})},
\end{array}
\end{equation*}
where $\|w(0)\|_{H^2(U'_{-}\cup U'_{+})}:=\|w(0)\|_{H^2(U'_{-})}+\|w(0)\|_{H^2(U'_{+})}$ with
$U'_{+}=U'\cap \R_{+}^n$ and $U'_{-}=U'\backslash\ \overline{U'_{+}}$.
We can derive from the resulting inequality that
\beqnx
&&\displaystyle \sum^n_{i=1}\int_I
\| D^h_k D_i w'(t)\|^2_{L^2(G^+)}  dt\\
\displaystyle &\lesssim & \displaystyle \int_I\left(\|w'(t)\|^2_{H^1(U')}+\|w(t)\|^2_{H^1(U')}
+\|\hat{f}(t)\|^2_{L^2(U')} \right)dt+\|w(0)\|_{H^2(U'_{-}\cup U'_{+})} 
\eqnx
for $k=1,\cdots, n-1$ and all sufficiently small $|h|\neq 0$, where we have also used
the fact $\eta=1$ on $G^+$. Using Lemma \ref{DQ3}, we have
\bb\label{R-inequ3}
\sum_{1\leq i,j<2n}\|D_{i,j} w\|_{H^1(I;L^2(G^+))}\lesssim \|\hat{f}\|_{L^2(I;L^2(U'))}+\|{w}\|_{H^1(I;H^1(U'))}+\|w(0)\|_{H^2(U'_{-}\cup U'_{+})},
\ee
where $D_{i,j}w=D_iD_jw$.
>From (\ref{T-equ}) we obtain upon integration by parts that
for a.e. $t\in I$,
\begin{equation*}
\begin{array}{l}
\displaystyle\int_{G^+}
\hat{\sigma}_{nn}D_n w(t) D_n \varphi+\hat{\varepsilon}_{nn} D_nw'(t) D_n \varphi dy \\
\displaystyle=\int_{G^+}\left(\hat{f}(t) +\sum_{1\leq i,j<2n} D_i(\hat{\varepsilon}_{ij} D_j w'(t))+
\sum_{1\leq i,j<2n} D_i(\hat{\sigma}_{ij} D_j w(t))\right)\varphi dy\\
\end{array}
\end{equation*}
for any $\varphi\in C^\infty_0(G^+)$. Noting that $\hat{\sigma}_{ij}$ and
$\hat{\varepsilon}_{ij}$ are both continuously differentiable in $\overline{G}^+$ and the estimate (\ref{R-inequ3}),
 the right-hand side of the
equation above is well-defined and so we find that for a.e. $t\in I$, the weak
derivative of $\hat{\sigma}_{nn}D_n w(t)+ \hat{\varepsilon}_{nn} D_nw'(t)$ with
respect to
$y_n$ exists and it satisfies
\bb\label{Regular-identiy0}
-D_n \left(\hat{\sigma}_{nn}D_n w(t)+ \hat{\varepsilon}_{nn} D_nw'(t)\right)=
\hat{f}(t) +\sum_{1\leq i,j<2n} D_i(\hat{\varepsilon}_{ij} D_j w'(t))+
\sum_{1\leq i,j<2n} D_i(\hat{\sigma}_{ij} D_j w(t)) . 
\ee
For the sake of brevity, we write $g:=\hat{\varepsilon}_{nn}HD_n w$, where
$$
H(t,y):=\exp(\f{\hat{\sigma}_{nn}(y)}{\hat{\varepsilon}_{nn}(y)}t)\quad (t,y)\in I\times G^+.
$$
It follows readily that $H$ is strictly positive and $H\in C^1(I\times G^+)$.
(\ref{Regular-identiy0}) ensures that for a.e. $t\in I$,
\bb\label{Regular-identity}
-D_n\left(\f{g'(t)}{H}\right)=\hat{f}(t) +\sum_{1\leq i,j<2n} D_i(\hat{\varepsilon}_{ij} D_j w'(t))+
\sum_{1\leq i,j<2n} D_i(\hat{\sigma}_{ij} D_j w(t)).
\ee
A direct computation yields for a.e. $t\in I$,
\bb\label{Regular-identity2}
D_ng'(t)=\f{D_n H(t)}{H(t)}
g'(t)-H(t)\Big(\hat{f}(t)+\sum_{1\leq i,j<2n} D_i(\hat{\varepsilon}_{ij} D_j w'(t))
+\sum_{1\leq i,j<2n} D_i(\hat{\sigma}_{ij} D_j w(t))\Big).
\ee
Since  $\|g'\|_{L^2(I;L^2(G^+))}\lesssim \|w\|_{H^1(I;H^1(G^+))}$,
 we infer from
(\ref{R-inequ3}) and (\ref{Regular-identity2})  that
\bb\label{g1}
\|D_n g'\|_{L^2(I;L^2(G^+))}\lesssim \|\hat{f}\|_{L^2(I;L^2(U'))}+\|{w}\|_{H^1(I;H^1(U'))}+\|w(0)\|_{H^2(U'_{-}\cup U'_{+})} .
\ee
As $\|D_ng(0)\|\lesssim \|w(0)\|_{H^2(U_{+}')}$, an application of Lemma \ref{tool1} yields
\bb\label{g2}
\|D_n g\|_{L^2(I;L^2(G^+))}\lesssim \|\hat{f}\|_{L^2(I;L^2(U'))}+\|{w}\|_{H^1(I;H^1(U'))}+
\|w(0)\|_{H^2(U'_{-}\cup U'_{+})}\,.
\ee
We can then conclude from (\ref{g1}) and (\ref{g2}) that
$$
\|D_{n,n}w\|_{H^1(I;L^2(G^+))}
\lesssim \|\hat{f}\|_{L^2(I;L^2(U'))}+\|{w}\|_{H^1(I;H^1(U'))}+\|w(0)\|_{H^2(U'_{-}\cup U'_{+})}.
$$
Combining this with estimate (\ref{R-inequ3}), and
 transforming $w$ back to $u$ in the resulting inequality, we find
\bb
\|u\|_{H^1(I;H^2(B^+))}
\lesssim \|{f}\|_{L^2(I;H)}+\|u\|_{H^1(I;V)}+\|u_0\|_{\mathcal{Y}}.
\ee
By choosing a finite set of balls $\{B_{r_i/2}(x_{i})\}^N_{i=1}$ such that
it covers the boundary and then adding the estimates over these balls,
we obtain the desired result.
\end{proof}

Using the standard arguments (cf. \cite[Theorem 3.2.1.2]{Grisvard})
with some natural modifications and the estimates above,
we can prove the following regularity result in a general convex domain.
\begin{Theorem}\label{Regularity3}
Let $f\in L^2(I;H)$ and
$u_0\in \mathcal{Y}$. Assume that $\Omega$ is a bounded and convex domain,  $\Omega_1\Subset\Omega$ a $C^2$ subdomain,
and that $(A1)$ and $(A3)$ hold. Then, the interface problem
$(\ref{Equ})$ and $(\ref{jump})$ admits a unique strong solution, which satisfies
\bb
\|u\|_{H^1(I;\mathcal{Y})}\lesssim \|f\|_{L^2(I;H)}+\|u_0\|_{\mathcal{Y}}.
\ee
\end{Theorem}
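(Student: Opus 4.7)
The plan is to recycle as much of the proof of Theorem~\ref{Regularity} as possible and to close the remaining gap near the convex (but possibly non-smooth) outer boundary by invoking Grisvard's $H^2$-regularity on convex domains. Existence of a weak solution and the $H^1(I;V)$-bound come free of charge from Theorem~\ref{existence1.5} and Theorem~\ref{prior-estimate}, so the real work lies in upgrading the weak solution to an element of $H^1(I;\mathcal{Y})$.

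First, I would observe that the parts of the proof of Theorem~\ref{Regularity} that deliver interior $H^2$-regularity on $\Omega_1$ and two-sided $H^2$-regularity across the $C^2$-interface $\Gamma=\partial\Omega_1$ never touch $\partial\Omega$: they only use the interior difference-quotient technique and a local straightening of $\Gamma$, together with $\Omega_1\Subset\Omega$. These arguments therefore apply verbatim under the hypotheses of Theorem~\ref{Regularity3} and produce an open set $\mathcal{N}$ with $\overline{\Omega}_1\subset\mathcal{N}\Subset\Omega$ and the estimate
$$
\|u\|_{H^1(I;H^2(\mathcal{N}\cap\Omega_i))}\lesssim \|f\|_{L^2(I;H)}+\|u\|_{H^1(I;V)}+\|u_0\|_{\mathcal{Y}},\quad i=1,2.
$$
It remains to control $u$ in $H^2$ near $\partial\Omega$, where by $(A3)$ the coefficients $\sigma_2,\varepsilon_2$ are $C^1$-smooth.

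For that, I would introduce a cutoff $\zeta\in C^\infty(\overline{\Omega})$ with $\zeta\equiv1$ in a neighborhood of $\partial\Omega$ and $\zeta\equiv0$ on $\mathcal{N}$, and set $v:=\zeta u$. A commutator computation shows $v\in H^1(I;H^1_0(\Omega))$ with $\mathrm{supp}\,v(t)\subset\overline{\Omega}_2$ and
$$
-\nabla\!\cdot\!\bigl(\sigma\nabla v+\varepsilon\nabla v'\bigr) = \zeta f -2(\sigma\nabla u+\varepsilon\nabla u')\cdot\nabla\zeta -u\,\nabla\!\cdot\!(\sigma\nabla\zeta) -u'\,\nabla\!\cdot\!(\varepsilon\nabla\zeta) =: F.
$$
All terms but $\zeta f$ are supported in $\mathcal{N}\cap\Omega_2$, where the previous step already gives $H^2$-control of $u$; hence $F\in L^2(I;H)$ with a bound of the desired shape. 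The equation for $v$ then lives on the convex domain $\Omega$ with $C^1$-smooth coefficients and homogeneous Dirichlet data. I would redo the boundary part of the proof of Theorem~\ref{Regularity}, replacing the $C^2$-flattening diffeomorphism by Grisvard's convex-domain identity $\|D^2 w\|_{L^2(\Omega)}\le\|\Delta w\|_{L^2(\Omega)}$ for $w\in H^2(\Omega)\cap H^1_0(\Omega)$ from \cite[Theorem~3.2.1.2]{Grisvard}. This yields $v\in H^1(I;H^2(\Omega))$; combined with the first step it produces the claimed estimate on $u$, and uniqueness is inherited from Theorem~\ref{existence1.5}.

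The main obstacle is the last step: Grisvard's identity is an elliptic a priori bound, while our operator carries a time derivative, so the natural test function $-\Delta v'(t)$ is not a priori admissible because $v'(t)\in H^2(\Omega)\cap H^1_0(\Omega)$ is exactly what we are trying to prove. The standard remedy is a Galerkin approximation in the Dirichlet-Laplacian eigenbasis, where Grisvard's identity may be applied to each Galerkin iterate with a constant depending only on the convexity of $\Omega$, and one then passes to the limit. A secondary technical point is to absorb the lower-order terms generated by the $C^1$-dependence of $\sigma,\varepsilon$ on $x$ using the already-established $H^1(I;V)$-bound, exactly as the analogous terms $\mathbf{(J)_2}$ and $\mathbf{(J)_3}$ are handled in the proof of Theorem~\ref{Regularity}.
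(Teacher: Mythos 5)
Your proposal takes essentially the same route as the paper, which offers no detailed proof of Theorem \ref{Regularity3} but only the one-sentence remark that it follows from the estimates established in Theorem \ref{Regularity} combined with the standard convex-domain arguments of \cite[Theorem 3.2.1.2]{Grisvard}. Your write-up (reusing the interior and interface difference-quotient estimates, which never touch $\partial\Omega$, then cutting off near the convex outer boundary where the coefficients are $C^1$ and invoking Grisvard's identity via a Galerkin regularization) is a faithful and more explicit elaboration of exactly that sketch.
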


\subsection{Existence of a strong solution for smooth coefficients}

For the case with smooth coefficients, if we use
\bn
\item[$(A4)$] $\partial\Omega$ is $C^2$ and $\sigma,\varepsilon\in C^{1}(\overline{\Omega})$,
\en
instead of ($A2$) and  ($A3$), then we can obtain a better regularity result as follows, using the same argument as in the poof of Theorem \ref{Regularity}:

\begin{Theorem}\label{Regularity2}
Let $f\in L^2(I;H)$ and $u_0\in \mathcal{X}$. Under  assumptions ($A1$) and ($A4$),
the equation $(\ref{Equ})$ admits a unique strong solution $u$,
which satisfies the following estimate:
\begin{equation*}
\|u\|_{H^1(I;\mathcal{X})}\lesssim \|f\|_{L^2(I;H)}+\|u_0\|_{\mathcal{X}}.
\end{equation*}
\end{Theorem}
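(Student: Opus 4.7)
The plan is to follow the argument of Theorem \ref{Regularity} verbatim, but without the internal interface: under (A1) and (A4) we work in the single domain $\Omega$, and we aim for global $H^2$-regularity rather than piecewise $H^2$. By Theorem \ref{existence1.5} there already exists a unique weak solution $u\in H^1(I;V)$, with the a priori bound of Theorem \ref{prior-estimate}. In view of Lemma \ref{necessary} (applied with $\Omega_2$ empty, or simply by the interior/boundary regularity plus (\ref{Equ}) itself), it suffices to establish
$$
\|u\|_{H^1(I;\mathcal{X})}\lesssim \|f\|_{L^2(I;H)}+\|u\|_{H^1(I;V)}+\|u_0\|_{\mathcal{X}},
$$
because the last two terms on the right are already controlled by $\|f\|_{L^2(I;H)}+\|u_0\|_{\mathcal{X}}$.

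For the interior estimate I would repeat the difference-quotient computation in the proof of Theorem \ref{Regularity}: fix $U\Subset W\Subset \Omega$ and a cut-off $\eta\in C_0^\infty(W)$ with $\eta\equiv 1$ on $U$, and test (\ref{Def}) with $v=-D_k^{-h}(\eta^2 D_k^h u')$ for $k=1,\dots,n$. The three groups ${\bf (J)_1}$, ${\bf (J)_2}$, ${\bf (J)_3}$ are estimated exactly as in (\ref{J1})--(\ref{J3}), where now the bound $|D_k^h\sigma|,|D_k^h\varepsilon|\le K$ uses the $C^1(\overline{\Omega})$ regularity from (A4). Absorbing $(3m/5)\int\!\int \eta^2|\nabla D_k^h u'|^2$ on the left, applying Lemmas \ref{DQ3} and \ref{tool1}, and summing over $k$ yields the interior bound
$$
\|u\|_{H^1(I;H^2(U))}\lesssim \|f\|_{L^2(I;H)}+\|u\|_{H^1(I;V)}+\|u_0\|_{\mathcal{X}}.
$$

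For the boundary estimate I would use the standard $C^2$ boundary flattening: around each $x_0\in\partial\Omega$ take a ball $B=B_r(x_0)$ and a $C^2$ diffeomorphism $\Psi:B\to U'$ with $\det|\nabla\Psi|=1$, $\Psi(B\cap\Omega)\subset \R^n_+$, $\Psi(B\cap\partial\Omega)\subset\partial\R^n_+$. Set $w=u\circ\Psi^{-1}$, $\hat f=f\circ\Psi^{-1}$; the transformed equation has the same structure as (\ref{T-equ})--(\ref{general-Equ}) with $\hat\sigma_{ij},\hat\varepsilon_{ij}\in C^1(\overline{U'})$, $(\hat\varepsilon_{ij})$ uniformly elliptic and $(\hat\sigma_{ij})$ nonnegative. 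The key benefit of (A4) over (A2)--(A3) is that no interface is created by flattening, so one does not need the reflection trick or the splitting $U'_-\cup U'_+$ in (\ref{g1})--(\ref{g2}); the initial-data term is simply $\|w(0)\|_{H^2(U')}\lesssim\|u_0\|_{\mathcal{X}}$. Applying the same cut-off $\eta$ supported in some $W'\Subset U'$ with $\eta\equiv 1$ on $G\Supset G^+$, and using tangential difference quotients $D_k^h$ for $k=1,\dots,n-1$, I obtain tangential control
$$
\sum_{1\le i,j<2n}\|D_{i,j}w\|_{H^1(I;L^2(G^+))}\lesssim \|\hat f\|_{L^2(I;L^2(U'))}+\|w\|_{H^1(I;H^1(U'))}+\|u_0\|_{\mathcal{X}}.
$$
For the purely normal second derivative I proceed as in (\ref{Regular-identiy0})--(\ref{g2}): from the transformed PDE, $-D_n(\hat\sigma_{nn}D_n w+\hat\varepsilon_{nn}D_n w')$ equals an $L^2$ datum, and introducing the integrating factor $H(t,y)=\exp(\hat\sigma_{nn}t/\hat\varepsilon_{nn})$ and $g=\hat\varepsilon_{nn}H D_n w$ yields an explicit ODE in $t$ that combined with Lemma \ref{tool1} produces $\|D_{n,n}w\|_{H^1(I;L^2(G^+))}$ with the same right-hand side.

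A finite covering $\{B_{r_i/2}(x_i)\}$ of $\partial\Omega$ together with the interior estimate, transformed back to $x$-coordinates, gives
$$
\|u\|_{H^1(I;H^2(\Omega))}\lesssim \|f\|_{L^2(I;H)}+\|u\|_{H^1(I;V)}+\|u_0\|_{\mathcal{X}},
$$
and bounding the middle term by Theorem \ref{prior-estimate} finishes the estimate. Uniqueness is inherited from Theorem \ref{existence1.5}, and the strong-form identity in Definition \ref{Def2} follows from the density of $V$ in $L^2$ once $\nabla\cdot(\sigma\nabla u+\varepsilon\nabla u')\in L^2$ is known, which is the case thanks to (A4) and $u\in H^1(I;\mathcal{X})$. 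The main technical step, as in Theorem \ref{Regularity}, is the recovery of $D_{n,n}w$ from tangential regularity via the integrating factor $H$; everything else is a direct specialization of the earlier proof to the no-interface setting.
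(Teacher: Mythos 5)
Your proposal is correct and follows exactly the route the paper intends: the paper gives no separate proof of Theorem \ref{Regularity2}, stating only that it follows ``using the same argument as in the proof of Theorem \ref{Regularity}'', and your sketch is a faithful specialization of that argument (interior difference quotients, $C^2$ boundary flattening, tangential estimates, then recovery of $D_{n,n}w$ via the integrating factor $H$) to the interface-free setting with global $H^2$-regularity. The only refinement you add --- correctly --- is the observation that under $(A4)$ the flattening creates no internal interface, so the $U'_-\cup U'_+$ splitting is unnecessary and the initial-data term reduces to $\|u_0\|_{\mathcal{X}}$.
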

\begin{Remark}
By the standard semigroup theory (cf. $\cite{Yagi}$), we can actually  achieve a better estimate, i.e.,  under the assumptions of
Theorem \ref{Regularity2}, we have $u\in C^1([0,T];\mathcal{X})$. That is, $u$ is a classical solution.
\end{Remark}

\section{Finite element approximation and error estimates} \label{sec4}
In this section we propose a fully discrete finite element scheme to approximate the solution
of the interface problem $(\ref{Equ})$ and $(\ref{jump})$, and establish its optimal convergence  
under the minimum regularity assumptions on the given data. 
To do so, we first consider an auxiliary semi-discrete finite element scheme  for 
the concerned interface problem and develop its optimal convergence, which 
will lead to the optimal convergence of the fully discrete scheme. 

Unless otherwise notified, we assume below that
$f\in L^2(I;H)$ and $u_0\in \mathcal{Y}$.
For the sake of exposition, we further make the following
assumptions:
\bn
\item[$(A5)$]  $\Omega$ is a convex polygon or polyhedron in $\mathbb{R}^n$ with $n=2$ or $3$, and $\Omega_1\Subset \Omega$
is a domain with $C^2$-boundary;
\en

\bn
\item[$(A6)$]  The coefficients $\varepsilon$ and $\sigma$ are constants in each domain, namely,
$\varepsilon=\varepsilon_i$ and $\sigma=\sigma_i$ in $\Omega_i$, $i=1,2$, where $\varepsilon_i$ and $\sigma_i$
are two positive constants.
\en
Clear, assumption ($A1$) is satisfied if ($A6$) holds. From Theorem \ref{Regularity3}, it follows that
there exists a strong solution to the interface problem $(\ref{Equ})$ and $(\ref{jump})$.
\begin{Remark}
For the sake of exposition, we assume that
$\Omega$ is a convex polygon (if $n=2$) or a convex polyhedral domain (if $n=3$). The actual curved boundary
can be treated in the same manner as we handle the interface $\Gamma$ in
our subsequent analysis of this section.
\end{Remark}

We now introduce a triangulation of the domain $\Omega$. First we
triangulate $\Omega_1$ using a quasi-uniform mesh $\mathcal{T}^1_h$
with simplicial elements of size $h$, which form a polyhedral domain $\Omega_{1,h}$.
The triangulation $\mathcal{T}^1_h$ is done such that
all the boundary vertices of $\Omega_{1,h}$ lie on the boundary
of $\Omega_1$. Then we triangulate $\Omega_2$ using a quasi-uniform mesh $\mathcal{T}^2_h$
with simplicial elements of size $h$,
which form a polyhedral domain $\Omega_{2,h}$. The triangulation $\mathcal{T}^2_h$ is done
such that  all the vertices
of the outer polyhedral boundary $\p\Omega$ are also the vertices of $\Omega_{2,h}$,
while all the vertices on the inner boundary of $\Omega_{2,h}$ match the
boundary vertices of $\Omega_{1,h}$.
More precisely, the triangulation $\mathcal{T}_h$ satisfies the following conditions:
\bn
\item[$(T1)$]  $\overline{\Omega}=\cup_{K\in \mathcal{T}_h}K$;

\item[$(T2)$]  if $K_1,K_2\in\mathcal{T}_h$ with $K_1\neq K_2$, then either
$K_1\cap K_2=\emptyset$ or $K_1\cap K_2$ is a common vertex, an edge or a face;

\item[$(T3)$] for each $K$, all its vertex is completely contained in
either $\overline{\Omega}_1$ or $\overline{\Omega}_2$.

\en

Now we define $V_h$ to be the continuous piecewise linear finite element
space on the triangulation $\mathcal{T}_h$ and $V^0_h$ the
closed subspace of $V_h$ with its functions vanishing
on the boundary $\p\Omega$. Then, we study the approximation of
piecewise smooth functions by  finite elements in $V_h$.
Clearly, the accuracy of this approximation depends
on how well the mesh $\mathcal{T}_h$ resolve the interface $\Gamma$. Following the notation
introduced in
\cite{Li}, we define, for $\lambda>0$ with $\lambda<\min\{ \mathrm{dist}(\Gamma,\p \Omega), \f{h}{2}\}$, a
tubular neighborhood $S_\lambda$ of
$\Gamma$ by
$$
S_\lambda:=\{x\in\Omega;\ \mathrm{dist}(x,\partial\Gamma)<\lambda\}.
$$
Then, we decompose $\mathcal{T}_h$ into three disjoint subsets $\mathcal{T}_h=\mathring{\mathcal{T}}_h^1\cup \mathring{\mathcal{T}}_h^2\cup \mathcal{T}_{*}$, where
$$
\mathring{\mathcal{T}}_h^i=\{K\in\mathcal{T}_h;K\subset \Omega_i\backslash S_\lambda\}, \quad i=1,2,
$$
and $\mathcal{T}_{*}:=\mathcal{T}_h\backslash (\mathring{\mathcal{T}}_h^1\cup \mathring{\mathcal{T}}_h^2)$. Furthermore, we write
$\mathcal{T}^i_{*}=\{K\in \mathcal{T}_{*}; K\subset\Omega_i\cup S_\lambda \}$.  Since
$\Gamma$ is of class $C^2$,
we know  from \cite{Chen,Feistauer} that there exists $\lambda>0$ such that
\bb\label{lambda}
\lambda=O(h^2),
\ee
and  $\mathcal{T}_*=\mathcal{T}^1_{*}\cup \mathcal{T}^2_{*}$ and $\mathcal{T}^1_{*}\cap \mathcal{T}^2_{*}=\emptyset$,
provided that $h$ is appropriately small.
An important observation is that
$\overline{\Omega}_{i,h}=\cup\{\overline{K}; K\in\mathring{\mathcal{T}}^i_h\cup \mathcal{T}_{*}^i\}$, $i=1,2$, i.e.,
$\mathcal{T}_h^i=\mathring{\mathcal{T}}^i_h\cup \mathcal{T}_{*}^i$.
The notation $S_\lambda$ not only quantifies how well
the mesh $\mathcal{T}_h$ resolves the interface, but it also allows
us to use the lemma \ref{deltaestimate}-\ref{wh}, which were first established
in \cite{Li},  in the subsequent analysis.

We note that the evaluation of the entries of the
stiffness matrix involving interface elements is not trivial in
the three-dimensional case if the mesh is not aligned with the interface. So we shall adopt
the following more convenient approximation bilinear forms
$a_{i,h}(\cdot,\cdot):V\times V\to \mathbb{R}$:

$$
a_{1,h}(u,v):=\sum^2_{i=1}\int_{\Omega_{i,h}}\sigma_i \nabla u\cdot \nabla v dx\, \q\m{and} \q 
a_{2,h}(u,v):=\sum^2_{i=1}\int_{\Omega_{i,h}}\varepsilon_i \nabla u\cdot \nabla v dx.
$$

To approximate the problem in space optimally,
we introduce the projection operator $Q_h:\mathcal{Y}\cap V\to V_h^0$. For each
$u\in\mathcal{Y}$, let $f^{*}=-\varepsilon_i\Delta u_i$ in $\Omega_i$,
$i=1,2$, and
$g^*=[\varepsilon\f{\p u}{\p \nu} ]$.
Clearly, $f^*\in H$ and $g^*\in L^2(\Gamma)$.
Then, we can define $Q_h:\mathcal{Y}\cap V\to V_h^0$ by
$$
a_{2,h}(Q_hu,v_h)=(f^*,v_h)+ \langle g^*,v_h \rangle , \quad \forall v\in V_h^0,
$$
where $\langle \cdot ,\cdot \rangle$ denotes the scalar product in $L^2(\Gamma)$.
We note that the right-hand side $L(\cdot):=(f^*,\cdot)+\langle g^*,\cdot  \rangle$
is independent of $h$. Thus, we can
follow the proof of \cite[Theorems 4.1 and 4.8]{Li}, which
mainly focuses on the case when $g^*=0$,
 to obtain the following result.
\begin{Lemma}\label{Qestimate}
We have
\bb\label{Projection}
a_2(u,v_h)=a_{2,h}(Q_hu,v_h), \quad \forall  v_h\in V_h^0.
\ee
Moreover, for any $u\in \mathcal{Y}$, the
following error estimate holds:
$$
\|u-Q_hu\|_{H}+h\|u-Q_hu\|_{V}\lesssim h^2\|u\|_{\mathcal{Y}}.
$$
\end{Lemma}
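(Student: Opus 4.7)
My plan is to first establish the Galerkin-type identity (\ref{Projection}) by integration by parts on each subdomain $\Omega_i$, and then handle the error estimate as a variational crime in the spirit of Strang's first lemma, since the triangulation $\mathcal{T}_h$ does not exactly resolve the $C^2$-interface $\Gamma$. For (\ref{Projection}), I would fix $u \in \mathcal{Y}$ and $v_h \in V_h^0$; integration by parts on each $\Omega_i$ gives
\[
a_2(u, v_h) \;=\; \sum_{i=1}^{2} \int_{\Omega_i} \varepsilon_i \nabla u_i \cdot \nabla v_h\, dx
\;=\; -\sum_{i=1}^{2} \int_{\Omega_i} \varepsilon_i \Delta u_i\, v_h\, dx \;+\; \int_{\Gamma} \Big[\varepsilon\, \f{\p u}{\p \nu}\Big] v_h\, ds,
\]
with the boundary contributions on $\p\Omega$ dropping because $v_h\in V_h^0$ and the two surface integrals on $\Gamma$ combining through the opposite orientation of the outer normals $\nu_1,\nu_2$. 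The right-hand side equals $(f^*,v_h)+\langle g^*,v_h\rangle$, which by definition of $Q_h$ is $a_{2,h}(Q_h u, v_h)$.

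For the $H^1$ bound, I would choose a quasi-interpolant $I_h u \in V_h^0$ suited to piecewise $H^2$-functions on $\mathcal{T}_h$ (the one constructed in \cite{Li} applies) so that $\|u - I_h u\|_V \lesssim h\|u\|_{\mathcal{Y}}$. Using (\ref{Projection}),
\[
a_{2,h}(Q_h u - I_h u, v_h) \;=\; a_{2,h}(u - I_h u, v_h) \;+\; \big(a_2(u, v_h) - a_{2,h}(u, v_h)\big), \quad v_h \in V_h^0.
\]
Taking $v_h = Q_h u - I_h u$ and invoking the uniform $V$-coercivity of $a_{2,h}$ on $V_h^0$ (inherited from $\varepsilon_i \ge m$ since $\overline\Omega_{1,h}\cup\overline\Omega_{2,h} = \overline\Omega$), the first term is controlled by the interpolation estimate, and the key quantity is the consistency error $(a_2 - a_{2,h})(u,v_h)$. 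This error is supported on $\mathcal{T}_*\subset S_\lambda$ with $\lambda = O(h^2)$; combining the piecewise $H^2$-regularity of $u$ with the $S_\lambda$-trace-type estimates of \cite{Li} yields $|(a_2-a_{2,h})(u,v_h)| \lesssim h\|u\|_{\mathcal{Y}}\|v_h\|_V$, and the triangle inequality then gives $|u - Q_h u|_V \lesssim h\|u\|_{\mathcal{Y}}$.

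For the $L^2$ bound I would apply Aubin--Nitsche duality. Let $\psi$ solve the stationary interface problem $-\nabla\cdot(\varepsilon\nabla\psi) = u - Q_h u$ in each $\Omega_i$ with $\psi = 0$ on $\p\Omega$, $[\psi] = 0$ and $[\varepsilon\,\p\psi/\p\nu] = 0$; by standard elliptic regularity with a $C^2$-interface, $\|\psi\|_{\mathcal{Y}} \lesssim \|u - Q_h u\|_H$. Testing with $u - Q_h u$, inserting $I_h\psi$, and reusing (\ref{Projection}) together with the consistency estimate above produces $\|u - Q_h u\|_H^2 \lesssim h^2 \|u\|_{\mathcal{Y}}\|\psi\|_{\mathcal{Y}} \lesssim h^2 \|u\|_{\mathcal{Y}}\|u - Q_h u\|_H$, whence the desired $O(h^2)$ bound. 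I expect the main obstacle to be the consistency estimate on the interface strip: \cite{Li} treats only $g^* = 0$, but in our setting $g^*$ enters only in the fixed, $h$-independent right-hand side defining $Q_h$, so the $\lambda = O(h^2)$ geometric arguments carry over without change; the only extra care needed is in choosing $I_h$ so its nodal values on elements of $\mathcal{T}_*$ refer to the correct piecewise branch of $u$, and in invoking the $L^2$- and trace-type inequalities on $S_\lambda$ from \cite{Li}.
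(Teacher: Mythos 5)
Your proposal is correct and follows essentially the same route the paper intends: the paper gives no written proof here but instead instructs the reader to follow \cite[Theorems 4.1 and 4.8]{Li}, whose argument is exactly your combination of the piecewise integration by parts for (\ref{Projection}), a Strang-type consistency estimate on the $O(h^2)$ strip $S_\lambda$ via Lemmas \ref{deltaestimate}--\ref{wh} for the $H^1$ bound, and Aubin--Nitsche duality with elliptic interface regularity for the $L^2$ bound. Your observation that the nonzero $g^*$ only enters through the fixed, $h$-independent right-hand side defining $Q_h$ is precisely the remark the paper makes to justify carrying over the $g^*=0$ analysis of \cite{Li}.
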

Now, we present some auxiliary results.
For the difference between the bilinear form
$a_{i}(\cdot,\cdot)$ and its approximated
bilinear form $a_{i,h}(\cdot,\cdot)$, we have the following result (cf. \cite[p. 27]{Li}).
\begin{Lemma}\label{deltaestimate}
Both $a_{1,h}(\cdot,\cdot)$ and  $a_{2,h}(\cdot,\cdot)$ are bounded, and $a_{2,h}(\cdot,\cdot)$ is coercive. Moreover,
the form $a^{\Delta}_{i,h}(u,v):=a_{i}(u,v)-a_{i,h}(u,v)$, $i=1,2$, satisfies
$$
|a_i^{\Delta}(u,v)|\lesssim | u|_{H^1(S_\lambda)}| v|_{H^1(S_\lambda)}.
$$
\end{Lemma}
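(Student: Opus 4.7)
The plan is to verify the three claims separately, with the interesting content concentrated in the difference estimate.

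For boundedness, I would observe that under (A6) each coefficient is bounded by $M$ and that $\overline{\Omega}_{1,h} \cup \overline{\Omega}_{2,h} = \overline{\Omega}$ with pairwise disjoint interiors (inherited from the disjointness of $\mathcal{T}_h^1$ and $\mathcal{T}_h^2$ as element sets). A single application of Cauchy--Schwarz then yields $|a_{i,h}(u,v)| \leq M |u|_{H^1(\Omega)} |v|_{H^1(\Omega)}$ for $i=1,2$. For coercivity of $a_{2,h}$, the lower bound $\varepsilon_j \geq m$ on each $\Omega_j$ produces $a_{2,h}(u,u) \geq m |u|_{H^1(\Omega)}^2$, which upgrades to a full $V$-norm estimate via the Poincar\'e inequality on $V = H_0^1(\Omega)$.

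The substance lies in the $a_i^\Delta$ estimate. I would rewrite
\[
a_i^\Delta(u,v) = \sum_{j=1}^2 c_j \Bigl(\int_{\Omega_j\setminus\Omega_{j,h}} \nabla u \cdot \nabla v\, dx - \int_{\Omega_{j,h}\setminus\Omega_j} \nabla u \cdot \nabla v\, dx \Bigr),
\]
where $c_j \in \{\sigma_j, \varepsilon_j\}$ is the constant value of the relevant coefficient on $\Omega_j$. The crucial geometric step is to verify that each symmetric difference $\Omega_j \triangle \Omega_{j,h}$ is contained in the tube $S_\lambda$. This follows from the structure of the mesh: on the interior elements of $\mathring{\mathcal{T}}_h^j$ there is no discrepancy between $\Omega_j$ and $\Omega_{j,h}$, so any discrepancy must occur inside an element of $\mathcal{T}_*$. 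Because the boundary vertices of $\Omega_{1,h}$ lie on $\Gamma$ and $\Gamma$ is of class $C^2$, the piecewise linear surface $\partial\Omega_{1,h}$ deviates from $\Gamma$ by $O(h^2)$, which is precisely the scale $\lambda$ fixed in \eqref{lambda}; hence every point of $\Omega_j \triangle \Omega_{j,h}$ lies within distance $\lambda$ of $\Gamma$.

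Once this inclusion is in place, Cauchy--Schwarz gives
\[
|a_i^\Delta(u,v)| \leq M \sum_{j=1}^2 |u|_{H^1(\Omega_j \triangle \Omega_{j,h})} |v|_{H^1(\Omega_j \triangle \Omega_{j,h})} \lesssim |u|_{H^1(S_\lambda)} |v|_{H^1(S_\lambda)},
\]
which is the desired bound. The main obstacle I anticipate is the geometric inclusion $\Omega_j \triangle \Omega_{j,h} \subset S_\lambda$; however, this is a classical consequence of interpolating a $C^2$ surface by piecewise linear faces on a quasi-uniform mesh, and I would invoke the results of \cite{Chen, Feistauer} (already used in the text to justify $\lambda = O(h^2)$) rather than redo the estimate from scratch.
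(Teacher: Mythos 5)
Your proposal is correct and follows essentially the same route as the paper, which offers no proof of its own but defers to \cite[p.~27]{Li}: the difference form is localized to the mismatch regions $\Omega_j \triangle \Omega_{j,h}$, these are shown to lie in $S_\lambda$, and Cauchy--Schwarz finishes. Note only that the inclusion $\Omega_j \triangle \Omega_{j,h} \subset S_\lambda$ already follows combinatorially from the paper's decomposition $\mathcal{T}_h^i=\mathring{\mathcal{T}}_h^i\cup\mathcal{T}_*^i$ with $\mathring{\mathcal{T}}_h^i\subset\Omega_i\setminus S_\lambda$ and $\mathcal{T}_*^i\subset\Omega_i\cup S_\lambda$, so the $O(h^2)$ geometry you invoke is needed only to guarantee the existence of such a $\lambda$, exactly as in \eqref{lambda}.
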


To estimate the energy-norm and the $L^2$-norm of a
function over $S_\lambda$, we will frequently use the following result (cf. \cite[Lemma 2.1 and Remark 4.2]{Li}).
\begin{Lemma}\label{interpolation}
For any $u\in V$, we have
\bb\label{interp2}
\|u\|^2_{L^2(S_\lambda)}\lesssim \lambda \|u\|^2_{V}.
\ee
Moreover, for any $u\in \mathcal{Y}$,
\bb\label{interp1}
| u|_{H^1(S_\lambda)}^2\lesssim \lambda\|u\|_{\mathcal{Y}}^2
\ee
with $|\cdot |_{H^1(S_\lambda)}$ being the $H^1$-semi norm.
\end{Lemma}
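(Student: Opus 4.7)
The plan is to localize and flatten the interface. Since $\Gamma = \partial\Omega_1$ is a compact $C^2$ surface, I would cover it by finitely many balls $B_j$ on each of which there is a $C^2$-diffeomorphism $\Psi_j$ (as constructed in the proof of Theorem \ref{Regularity}) straightening $\Gamma \cap B_j$ into a piece of the hyperplane $\{y_n = 0\}$. Using the smoothness of the $\Psi_j$, the image of $S_\lambda \cap B_j$ is contained in a flat strip $\{|y_n| < c\lambda\}$ (up to constants depending only on $\Gamma$), and the Jacobian of the transformation is bounded above and below. Thus, by a subordinated partition of unity, it suffices to prove the two inequalities in these flat local coordinates.

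For \eqref{interp2}, assume without loss of generality that $u \in C^1_0(\Omega)$ by density, so that the trace $u(y',0)$ is well-defined on the flattened interface. Using the fundamental theorem of calculus $u(y',y_n) = u(y',0) + \int_0^{y_n} D_n u(y',s)\,ds$ together with the elementary inequality $(a+b)^2 \leq 2a^2 + 2b^2$, I would estimate
\[
|u(y',y_n)|^2 \leq 2|u(y',0)|^2 + 2|y_n|\int_0^{|y_n|}|D_n u(y',s)|^2\,ds .
\]
Integrating over $|y_n| < c\lambda$ and then over $y'$, and invoking the standard trace inequality $\|u\|_{L^2(\Gamma)} \lesssim \|u\|_V$, yields $\|u\|_{L^2(S_\lambda)}^2 \lesssim \lambda \|u\|_V^2 + \lambda^2 \|\nabla u\|_H^2$. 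Since $\lambda$ is small, the desired bound follows.

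For \eqref{interp1}, I would split $S_\lambda = (S_\lambda \cap \Omega_1) \cup (S_\lambda \cap \Omega_2)$ and work on each side separately. On the $\Omega_1$-side, $\nabla u_1 \in H^1(\Omega_1)^n$ because $u \in \mathcal{Y}$, so each component of $\nabla u_1$ admits a trace on $\Gamma$. Applying the same Fubini/FTC argument as above but to $|\nabla u_1|$ in the one-sided flat strip $\{0 < y_n < c\lambda\}$, I would obtain
\[
\int_{\Omega_1 \cap S_\lambda}|\nabla u_1|^2\,dx \lesssim \lambda \|\nabla u_1\|_{L^2(\Gamma)}^2 + \lambda^2 \|u_1\|_{H^2(\Omega_1)}^2 \lesssim \lambda \|u_1\|_{H^2(\Omega_1)}^2,
\]
where the last step uses the $H^1(\Omega_1) \hookrightarrow L^2(\Gamma)$ trace inequality for the components of $\nabla u_1$. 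The symmetric estimate on $\Omega_2$ follows identically, and summing gives \eqref{interp1}.

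The only delicate point is the handling of the $\Omega_2$-side in \eqref{interp1}, since $\Omega_2$ has two boundary components (the outer $\partial \Omega$ and the inner $\Gamma$); however, because $\Omega_1 \Subset \Omega$, the neighborhood $S_\lambda$ stays away from $\partial\Omega$ for small $\lambda$, so only $\Gamma$ is relevant and the one-sided argument still applies. Apart from this, the main bookkeeping obstacle is to ensure that the constant absorbed in $\lesssim$ depends only on the $C^2$-geometry of $\Gamma$ and not on $u$ or $\lambda$, which is automatic from compactness of $\Gamma$ and the finiteness of the covering.
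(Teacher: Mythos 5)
Your proof is correct; note that the paper does not prove this lemma at all but simply cites \cite[Lemma 2.1 and Remark 4.2]{Li}, and your argument (flattening $\Gamma$ by finitely many $C^2$ charts, then the fundamental-theorem-of-calculus/Cauchy--Schwarz estimate in the normal direction combined with the trace inequality, applied to $u$ itself for \eqref{interp2} and one-sidedly to the components of $\nabla u_i$ for \eqref{interp1}) is essentially the standard proof given in that reference. Your remark that $S_\lambda$ stays away from $\partial\Omega$ because $\Omega_1\Subset\Omega$ and $\lambda<\mathrm{dist}(\Gamma,\partial\Omega)$ correctly disposes of the only delicate point on the $\Omega_2$ side.
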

The following estimate is critical for proving our main result (cf. \cite[Lemma \ 4.5]{Li}).
\begin{Lemma}\label{wh}
There exists a positive constant $\mu$ independent of $h$ such that
$$
\|w_h\|_{H^1(S_\lambda)}\lesssim\sqrt{\frac{\lambda}{h}}\|w_h\|_{H^1(S_{\mu h})}, \quad \forall
w_h\in V_h.
$$
\end{Lemma}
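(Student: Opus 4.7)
The plan is to exploit the piecewise linear structure of $w_h\in V_h$ together with the geometry of the thin tube $S_\lambda$ relative to the mesh size $h$. Since $\nabla w_h$ is constant on each simplex $K\in\mathcal{T}_h$, every contribution of $w_h$ to $\|\cdot\|_{H^1(S_\lambda)}$ can be reduced to a question about the measure $|K\cap S_\lambda|$ compared to $|K|$. I would start by letting $\mathcal{K}_\lambda:=\{K\in\mathcal{T}_h:K\cap S_\lambda\neq\emptyset\}$ and choosing $\mu$ so that $\bigcup_{K\in\mathcal{K}_\lambda}\overline{K}\subset S_{\mu h}$. Because the mesh is quasi-uniform with size $h$, every such $K$ lies within distance $Ch$ of $\Gamma$, and since $\lambda=O(h^2)\ll h$, one can take $\mu\geq C+1$ independent of $h$ to guarantee this containment.

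The key geometric estimate is the bound
\begin{equation*}
|K\cap S_\lambda|\lesssim h^{n-1}\lambda\quad\text{for every }K\in\mathcal{K}_\lambda,
\end{equation*}
which follows from the fact that $\Gamma$ is a $C^2$ hypersurface, so locally $S_\lambda$ looks like a slab of transverse thickness $\lambda$ and in-plane extent controlled by $h$. Combined with the quasi-uniform lower bound $|K|\gtrsim h^n$, this gives the ratio
\begin{equation*}
\frac{|K\cap S_\lambda|}{|K|}\lesssim \frac{\lambda}{h}.
\end{equation*}
Since $\nabla w_h|_K$ is constant, this immediately yields $\|\nabla w_h\|_{L^2(K\cap S_\lambda)}^2\lesssim (\lambda/h)\,\|\nabla w_h\|_{L^2(K)}^2$. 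For the $L^2$-part, I would use that on a simplex $K$ of size $h$ the piecewise linear function satisfies $\|w_h\|_{L^\infty(K)}^2\lesssim |K|^{-1}\|w_h\|_{L^2(K)}^2$ (standard inverse estimate for linear functions), and then estimate $\|w_h\|_{L^2(K\cap S_\lambda)}^2\leq \|w_h\|_{L^\infty(K)}^2|K\cap S_\lambda|\lesssim (\lambda/h)\|w_h\|_{L^2(K)}^2$.

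Summing these local bounds over $K\in\mathcal{K}_\lambda$ and using the containment $\bigcup_{K\in\mathcal{K}_\lambda}\overline{K}\subset S_{\mu h}$ yields
\begin{equation*}
\|w_h\|_{H^1(S_\lambda)}^2\lesssim \frac{\lambda}{h}\sum_{K\in\mathcal{K}_\lambda}\|w_h\|_{H^1(K)}^2\leq \frac{\lambda}{h}\,\|w_h\|_{H^1(S_{\mu h})}^2,
\end{equation*}
and the square root gives the stated estimate. The main technical obstacle is the geometric slab bound $|K\cap S_\lambda|\lesssim h^{n-1}\lambda$: a careless argument would only give $|K\cap S_\lambda|\leq |K|\lesssim h^n$, losing the crucial factor $\lambda/h$. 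To make it rigorous one has to straighten out $\Gamma$ locally via a $C^2$ chart (as was done in the proof of Theorem \ref{Regularity}), write $S_\lambda$ in the new coordinates as $\{|y_n|<\lambda(1+O(h))\}$, and use $\lambda=O(h^2)$ together with quasi-uniformity to count which elements can contribute; this is precisely where the $C^2$-regularity of the interface and the resolution property of $\mathcal{T}_h$ enter.
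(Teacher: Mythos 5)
The paper does not prove this lemma at all --- it is quoted verbatim from \cite[Lemma 4.5]{Li} --- and your argument is correct and is essentially the standard proof behind that cited result: localize to the elements meeting $S_\lambda$, use that $\nabla w_h$ is constant (and $w_h$ polynomial, via the inverse estimate $\|w_h\|_{L^\infty(K)}^2\lesssim |K|^{-1}\|w_h\|_{L^2(K)}^2$) on each simplex, and reduce everything to the measure ratio $|K\cap S_\lambda|/|K|\lesssim \lambda/h$. You also correctly identify the only delicate points: the slab bound $|K\cap S_\lambda|\lesssim \lambda h^{n-1}$, which is rigorously a Fubini argument after writing $\Gamma$ locally as a $C^2$ graph $y_n=\phi(y')$ so that $K\cap S_\lambda\subset\{|y'|\lesssim h,\ |y_n-\phi(y')|\lesssim\lambda\}$, and the containment $\bigcup_{K\in\mathcal{K}_\lambda}\overline{K}\subset S_{\mu h}$, which uses $\lambda<h/2$ and quasi-uniformity.
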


\subsection{Semi-discrete finite element approximation and error estimates}

We now consider an auxiliary semi-discrete finite element scheme  for 
our concerned interface problem $(\ref{Equ})$ and $(\ref{jump})$ and develop its optimal convergence, which 
will lead directly to the optimal convergence of the fully discrete scheme in section 4.2.

\medskip
\noindent {\bf Problem ($\mathbf{ P}_{h}$)}.
Let $u_h(0)=Q_h u_0$. Find $u_h\in H^1(I;V_h^0)$ such that for a.e. $t\in I$,
\begin{equation}\label{SD}
a_{1,h}(u_h(t),v_h)+a_{2,h}(u'_h(t),v_h)=\langle f(t),v_h\rangle_{V'\times V},  \quad \forall v_h\in V_h^0.
\end{equation}

We first establish an auxiliary lemma, which will be used in the proof below.
\begin{Lemma}\label{discrete-bound}
Let $f\in L^2(I;V')$ and $u_h$ be the solution to  Problem {\bf(${\bf P_h}$)}.  We have
$$
\|u_h\|_{H^1(I;V)}\lesssim \|f\|_{L^2(I;V')}+\|Q_hu_0\|_{V}.
$$
\end{Lemma}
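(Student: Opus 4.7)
The plan is to replay the a priori estimate of Theorem \ref{prior-estimate} at the discrete level, using the approximate bilinear forms $a_{1,h}$ and $a_{2,h}$. Under assumption $(A6)$, $\sigma$ and $\varepsilon$ are piecewise constant on $\Omega_{1,h}\cup\Omega_{2,h}=\Omega$, so $a_{2,h}$ is coercive on $V_h^0$ with constant $m$ and $a_{1,h}$ is bounded by $M$ (cf. Lemma \ref{deltaestimate}); these are the only ingredients beyond what we already exploited in the continuous case.

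The first step is to test \eqref{SD} with $v_h=u_h'(t)\in V_h^0$ and integrate over $I$, giving
$$
\int_0^T a_{1,h}(u_h(t),u_h'(t))\,dt+\int_0^T a_{2,h}(u_h'(t),u_h'(t))\,dt=\int_0^T\langle f(t),u_h'(t)\rangle_{V'\times V}\,dt.
$$
Because the coefficients in $a_{1,h}$ are $t$-independent and the form is symmetric, the first integral telescopes to
$\tfrac12 a_{1,h}(u_h(T),u_h(T))-\tfrac12 a_{1,h}(u_h(0),u_h(0))$. The former is nonnegative (since $\sigma_i\geq 0$) and can be dropped, while the latter is bounded via the boundedness of $a_{1,h}$ by $\tfrac{M}{2}\|u_h(0)\|_V^2=\tfrac{M}{2}\|Q_hu_0\|_V^2$.

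In the second step, using coercivity of $a_{2,h}$ on the left-hand side and the duality pairing together with Young's inequality on the right-hand side, one arrives at
$$
\|u_h'\|_{L^2(I;V)}\lesssim \|f\|_{L^2(I;V')}+\|Q_hu_0\|_V.
$$
Finally, to control $\|u_h\|_{L^2(I;V)}$, I would apply Lemma \ref{tool1} componentwise with $u=u_h$ (valid since $u_h\in H^1(I;V_h^0)\subset H^1(I;V)$ and $u_h(0)=Q_hu_0\in V$), which yields
$$
\|u_h\|_{L^2(I;V)}\lesssim \|u_h'\|_{L^2(I;V)}+\|Q_hu_0\|_V.
$$
Combining these two bounds and taking the square root of the sum of squares delivers the desired estimate on $\|u_h\|_{H^1(I;V)}$.

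The argument is essentially a faithful discrete copy of the proof of Theorem \ref{prior-estimate}, and I do not foresee a serious obstacle. The only place that requires care is ensuring that the telescoping identity and the coercivity constant depend only on $m$, $M$, and $T$ (not on $h$), which follows from $(A6)$ and Lemma \ref{deltaestimate} without any mesh-dependent penalty.
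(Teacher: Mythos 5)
Your proposal is correct and matches the paper exactly: the authors state that the proof is the same as that of Theorem \ref{prior-estimate} and omit the details, and your argument is precisely that discrete replay (test with $u_h'$, telescope the $a_{1,h}$ term, use coercivity of $a_{2,h}$ with Young's inequality, then Lemma \ref{tool1}). No issues to flag.
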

 Since the proof of this Lemma is the same as that of Theorem \ref{prior-estimate}, we omit the
 details.  With the results above, we are now in a position to prove the error estimate
 in the energy norm.
\begin{Theorem}\label{H1esti} Let
$u$ be the solution to the interface problem $(\ref{Equ})$ and $(\ref{jump})$ and
 $u_h$  the solution to Problem  {\bf ($\mathbf{P}_{h}$)},
then the following estimate holds:
$$
\|u-u_h\|_{H^1(I;V)}
\lesssim h \left(\|f\|_{L^2(I;H)}+\|u_0\|_{\mathcal{Y}}\right).
$$
\end{Theorem}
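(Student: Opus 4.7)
The plan is to use the elliptic-type projection $Q_h$ from Lemma \ref{Qestimate} as an intermediate and split the error as $u-u_h=\rho+\theta$ with $\rho:=u-Q_hu$ and $\theta:=Q_hu-u_h$. The $\rho$ part is the easy piece: since the right-hand side defining $Q_h$ depends linearly on $u$, the operator $Q_h$ commutes with $\partial_t$, so $\rho'=u'-Q_hu'$, and Lemma \ref{Qestimate} combined with the regularity bound from Theorem \ref{Regularity3} yields
\begin{equation*}
\|\rho\|_{H^1(I;V)}\lesssim h\|u\|_{H^1(I;\mathcal{Y})}\lesssim h\bigl(\|f\|_{L^2(I;H)}+\|u_0\|_{\mathcal{Y}}\bigr).
\end{equation*}

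For $\theta$, I subtract (\ref{SD}) from (\ref{Def}) with $v=v_h\in V_h^0$ and use the projection identity $a_2(u',v_h)=a_{2,h}((Q_hu)',v_h)$, splitting $a_1(u,v_h)=a_{1,h}(u,v_h)+a_1^{\Delta}(u,v_h)$ with $a_1^{\Delta}:=a_1-a_{1,h}$. This produces the error equation
\begin{equation*}
a_{2,h}(\theta'(t),v_h)+a_{1,h}(\theta(t),v_h)=-a_{1,h}(\rho(t),v_h)-a_1^{\Delta}(u(t),v_h),\qquad\forall v_h\in V_h^0.
\end{equation*}
Testing with $v_h=\theta'(t)$, using symmetry of $a_{1,h}$ to write $a_{1,h}(\theta,\theta')=\tfrac12\tfrac{d}{dt}a_{1,h}(\theta,\theta)$, and integrating from $0$ to $t$ (noting that $\theta(0)=Q_hu_0-u_h(0)=0$) I get, thanks to coercivity of $a_{2,h}$ from Lemma \ref{deltaestimate} and nonnegativity of $a_{1,h}(\theta(t),\theta(t))$,
\begin{equation*}
m\int_0^t\|\theta'\|_V^2\,ds\lesssim\int_0^t\bigl(|a_{1,h}(\rho,\theta')|+|a_1^{\Delta}(u,\theta')|\bigr)\,ds.
\end{equation*}
The first right-hand term is routine: boundedness of $a_{1,h}$ plus Young's inequality plus Lemma \ref{Qestimate} gives $|a_{1,h}(\rho,\theta')|\le \varepsilon\|\theta'\|_V^2+Ch^2\|u\|_{\mathcal{Y}}^2$.

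The main obstacle is the nonconforming consistency term $a_1^{\Delta}(u,\theta')$, where $\theta'$ carries only $V$-regularity. Lemma \ref{deltaestimate} bounds it by $|u|_{H^1(S_\lambda)}|\theta'|_{H^1(S_\lambda)}$. For the $u$ factor I invoke Lemma \ref{interpolation} and $\lambda=O(h^2)$ from (\ref{lambda}) to obtain $|u|_{H^1(S_\lambda)}\lesssim\sqrt{\lambda}\|u\|_{\mathcal{Y}}\lesssim h\|u\|_{\mathcal{Y}}$. The naive bound $|\theta'|_{H^1(S_\lambda)}\le\|\theta'\|_V$ would cost half an order; the key rescue is that $\theta'\in V_h$, so Lemma \ref{wh} gives $\|\theta'\|_{H^1(S_\lambda)}\lesssim\sqrt{\lambda/h}\,\|\theta'\|_V\lesssim\sqrt{h}\,\|\theta'\|_V$. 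Together,
\begin{equation*}
|a_1^{\Delta}(u,\theta')|\lesssim h^{3/2}\|u\|_{\mathcal{Y}}\|\theta'\|_V\le\varepsilon\|\theta'\|_V^2+Ch^3\|u\|_{\mathcal{Y}}^2.
\end{equation*}

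Absorbing both $\varepsilon\|\theta'\|_V^2$ terms into the left-hand side and using Theorem \ref{Regularity3} to bound $\int_0^T\|u\|_{\mathcal{Y}}^2\,dt\lesssim\|u\|_{H^1(I;\mathcal{Y})}^2\lesssim\|f\|_{L^2(I;H)}^2+\|u_0\|_{\mathcal{Y}}^2$ yields $\|\theta'\|_{L^2(I;V)}\lesssim h(\|f\|_{L^2(I;H)}+\|u_0\|_{\mathcal{Y}})$. Since $\theta(0)=0$, Lemma \ref{tool1} then controls $\|\theta\|_{L^2(I;V)}$ by the same quantity, so $\|\theta\|_{H^1(I;V)}\lesssim h(\|f\|_{L^2(I;H)}+\|u_0\|_{\mathcal{Y}})$. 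A triangle inequality with the $\rho$ estimate delivers the claimed bound on $\|u-u_h\|_{H^1(I;V)}$.
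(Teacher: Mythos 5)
Your proposal is correct and follows essentially the same route as the paper: the same splitting $u-u_h=(u-Q_hu)+(Q_hu-u_h)$, the same error equation for the discrete part, and the same stability/energy argument (the paper packages your testing-with-$\theta'$ step as the analogue of Lemma \ref{discrete-bound} applied to a residual functional $F$), with Lemmas \ref{Qestimate}, \ref{deltaestimate}, \ref{interpolation} and $\lambda=O(h^2)$ supplying the rates. One small remark: your use of Lemma \ref{wh} for $|\theta'|_{H^1(S_\lambda)}$ is valid but unnecessary here --- the crude bound $|\theta'|_{H^1(S_\lambda)}\le\|\theta'\|_V$ already gives $|a_1^{\Delta}(u,\theta')|\lesssim\sqrt{\lambda}\,\|u\|_{\mathcal{Y}}\|\theta'\|_V=O(h)\|u\|_{\mathcal{Y}}\|\theta'\|_V$, which suffices after Young's inequality (so it does not ``cost half an order''), and accordingly the paper reserves Lemma \ref{wh} for the $L^2$ duality argument of Theorem \ref{L2esti}.
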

\begin{proof}
We first have the following decomposition:
\begin{equation}
\begin{array}{l}
\displaystyle \left(\int^T_{0}\big(\|u(t)-u_h(t)\|^2_{V}+\|u'(t)-u'_{h}(t)\|^2_{V}\big)dt
\right)^{\f{1}{2}}\\
\displaystyle \leq \left(\int^T_{0}\big(\|u(t)-Q_h u(t)\|^2_{V}+\|u'(t)-Q_hu'(t)\|^2_{V}\big)dt
\right)^{\f{1}{2}} \\
\quad\quad \displaystyle +\left(\int^T_{0}\big(\|Q_h u(t)-u_h(t)\|^2_{V}
+\|Q_h u'(t)- u'_{h}(t)\|^2_{V}\big)dt\right)^{\f{1}{2}}\\
=: \text{(I)}_1+\text{(I)}_2 .
\end{array}
\end{equation}
Using Lemma \ref{Qestimate} and Theorem \ref{Regularity3}, we obtain
$$
\text{(I)}_1 \lesssim  h
 \|u\|_{H^1(I;\mathcal{Y})}\lesssim h \left(\|f\|_H+\|u_0\|_{\mathcal{Y}}\right).
$$
It suffices to prove a similar estimate for $\text{(I)}_2$.
To this end, we first notice that the function
$w:=u_h-Q_h u$ belongs to $H^1(I;V_h^0)$. In addition,
using the identity that $ (Q_hu)'(t)=Q_h u'(t)$ for a.e. $t\in I$ and the definition
of $u$ and $u_h$, we find  for a.e. $t\in I$,
$$
a_{1,h}(w(t),v_h)+a_{2,h}(w'(t),v_h)=\langle F(t),v_h\rangle_{V'\times V}, \quad  \forall v_h\in V_h^0,
$$
where $F(t)\in V'$ for $t\in I$,  defined by
$$
\langle F(t),v \rangle_{V'\times V}:=a_{1}(u-Q_h u,v )+a_{2}(u'-Q_h u',v )
+a^{\Delta}_{1}(Q_h u,v )+a^{\Delta}_{2}(Q_h u' ,v ), \quad \forall v \in V.
$$
Analogously to Lemma \ref{discrete-bound}, we derive
\bb\label{CI2}
\text{(I)}_2=\|w\|_{H^1(I;V)}\lesssim  \|F\|_{L^2(I;V')}.
\ee
Thus, it remains to estimate $\|F\|_{L^2(I;V')}$. For $t\in I$ and any $v\in V$, we use Lemma
\ref{deltaestimate} to obtain
\begin{equation*}
\begin{array}{l}
\displaystyle |\langle F(t),v\rangle_{V'\times V}|\\
\displaystyle \lesssim \left(\|u(t)-Q_h u(t)\|_{V}+|Q_h u(t)|_{H^1(S_\lambda)}
+\|u'(t)-Q_h u'(t)\|_{V}+|Q_h u'(t)|_{H^1(S_\lambda)}\right)\|v\|_V,
\end{array}
\end{equation*}
which, together with the estimates
$$
|Q_h u(t)|_{H^1(S_\lambda)}\leq |u(t)|_{H^1(S_\lambda)}+|u(t)-Q_h u(t)|_{H^1(S_\lambda)},
$$
and
$$
|Q_h u'(t)|_{H^1(S_\lambda)}\leq |u'(t) |_{H^1(S_\lambda)}
+|u'(t) -Q_h u'(t)|_{H^1(S_\lambda)},
$$
implies that
$$
\|F\|_{L^2(I;V')}\lesssim \|Q_hu-u\|_{H^1(I,V)}+\|u\|_{H^1(I;H^1(S_\lambda))}.
$$
Now Lemmas \ref{Qestimate} and \ref{interpolation}, together with Theorem \ref{Regularity3},  yield
$$
\|F\|_{L^2(I;V')}\lesssim (h+\sqrt{\lambda})\left(\|f\|_{L^2(I;H)}+\|u_0\|_{\mathcal{Y}}\right).
$$
>From this, (\ref{lambda}) and (\ref{CI2}), the desired estimate for $\text{(I)}_2$ is established.
\end{proof}

Now, we are in a position to prove the $L^2$-estimate.

\begin{Theorem}\label{L2esti}
We have the following estimate in $L^2$-norm:
\begin{equation*}
\|u-u_h\|_{L^2(I;H)}\lesssim h^2\left(\|f\|_{L^2(I;H)}+\|u_0\|_{\mathcal{Y}}\right).
\end{equation*}
\end{Theorem}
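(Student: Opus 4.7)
The plan is an Aubin--Nitsche duality argument that feeds off Theorem~\ref{H1esti}. For arbitrary $g\in L^2(I;H)$, I set up the backward-in-time dual problem: find $\varphi\in H^1(I;V)$ with $\varphi(T)=0$ such that, for a.e.\ $t\in I$,
$$
a_1(v,\varphi(t))-a_2(v,\varphi'(t))=(g(t),v)\qquad \forall\,v\in V.
$$
The time reversal $\psi(t):=\varphi(T-t)$ turns this into a forward problem of exactly the same form as \eqref{Equ} with source $g(T-t)$ and zero initial datum, so Theorem~\ref{Regularity3} applies and yields $\varphi\in H^1(I;\mathcal{Y})$ with $\|\varphi\|_{H^1(I;\mathcal{Y})}\lesssim\|g\|_{L^2(I;H)}$.

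Writing $e:=u-u_h$ and using the symmetry of $a_1$ and $a_2$ together with integration by parts in time (valid because both $e$ and $\varphi$ belong to $H^1(I;V)$), I obtain the representation
$$
(e,g)_{L^2(I;H)}=\int_I\!\bigl[a_1(e,\varphi)+a_2(e',\varphi)\bigr]\,dt\;+\;a_2(e(0),\varphi(0)),
$$
where the $t=T$ contribution is killed by $\varphi(T)=0$. Subtracting the equations satisfied by $u$ and $u_h$ gives the modified Galerkin relation
$$
a_1(e,v_h)+a_2(e',v_h)=-a_1^{\Delta}(u_h,v_h)-a_2^{\Delta}(u_h',v_h)\qquad \forall\,v_h\in V_h^0,
$$
which I apply pointwise in $t$ with $v_h=Q_h\varphi(t)$, so that splitting $\varphi=(\varphi-Q_h\varphi)+Q_h\varphi$ decomposes the time integral into an approximation piece and an interface-consistency piece.

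The approximation piece is controlled by $\|e\|_{H^1(I;V)}\cdot\|\varphi-Q_h\varphi\|_{L^2(I;V)}$, which is $h\cdot h$ by combining Theorem~\ref{H1esti} with Lemma~\ref{Qestimate}. For the interface-consistency piece $\int_I a_i^{\Delta}(\cdot,Q_h\varphi)\,dt$ I invoke Lemma~\ref{deltaestimate} and need to bound each of $|u_h|_{H^1(S_\lambda)}$, $|u_h'|_{H^1(S_\lambda)}$, and $|Q_h\varphi|_{H^1(S_\lambda)}$ by $O(h)$; for each such factor I introduce the continuous object via $u_h=u+(u_h-u)$ and $Q_h\varphi=\varphi+(Q_h\varphi-\varphi)$, controlling the smooth part by Lemma~\ref{interpolation} (which contributes $\sqrt\lambda=O(h)$ through \eqref{lambda}) and the discrete error part globally in $V$ by Theorem~\ref{H1esti} or Lemma~\ref{Qestimate}. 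For the remaining boundary term I write $a_2(e(0),\varphi(0))=a_2(u_0-Q_hu_0,\varphi(0))$ and split $\varphi(0)=(\varphi(0)-Q_h\varphi(0))+Q_h\varphi(0)$: the first part is handled by Lemma~\ref{Qestimate} applied to both factors, while the second is rewritten via the defining identity \eqref{Projection} as $-a_2^{\Delta}(Q_hu_0,Q_h\varphi(0))$ and bounded as above, using $\|\varphi(0)\|_{\mathcal Y}\lesssim\|\varphi\|_{H^1(I;\mathcal Y)}\lesssim\|g\|_{L^2(I;H)}$ from the embedding \eqref{embeding}. Collecting all contributions and taking the supremum over unit $g$ yields the asserted $O(h^2)$ bound.

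The main obstacle is producing the $O(h)$ factor on the strip semi-norms of discrete functions such as $|u_h|_{H^1(S_\lambda)}$ and $|Q_h\varphi|_{H^1(S_\lambda)}$: Lemma~\ref{wh} applied directly only delivers $\sqrt{\lambda/h}=O(\sqrt h)$, which is insufficient. The key device is to compare each discrete function to its continuous counterpart so that the favorable factor $\sqrt\lambda=O(h)$ is squeezed out only from the \emph{smooth} part via Lemma~\ref{interpolation}, while the discrete error part is absorbed by the full $V$-norm estimates already at hand. This is precisely the place where the local $H^2$-regularity of both the primal solution and the dual solution, supplied by Theorem~\ref{Regularity3}, is indispensable.
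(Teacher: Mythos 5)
Your argument is correct, and while it follows the same Aubin--Nitsche skeleton as the paper (backward dual problem, time reversal to invoke Theorem~\ref{Regularity3}, integration by parts in time producing an initial-data term, and a three-way split into approximation, interface-consistency, and boundary contributions), it differs in one substantive technical choice: the discrete comparison function. The paper introduces a second, \emph{discrete} dual solution $w_h\in H^1(I;V_h^0)$ solving the discrete backward problem, tests with it, and consequently needs two extra ingredients: the energy error estimate $\|w-w_h\|_{H^1(I;V)}\lesssim h\|e\|_{L^2(I;H)}$ (a re-run of the Theorem~\ref{H1esti} argument for the dual problem, estimate (\ref{ceas})) and the strip inverse inequality of Lemma~\ref{wh} to control $\|w_h\|_{H^1(I;H^1(S_\lambda))}$. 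You instead test with $Q_h\varphi$, so the only discrete--continuous comparison required is Lemma~\ref{Qestimate}, and the strip semi-norm $|Q_h\varphi|_{H^1(S_\lambda)}\lesssim(\sqrt{\lambda}+h)\|\varphi\|_{\mathcal Y}$ follows from exactly the triangle-inequality device the paper itself uses for $|Q_hu(0)|_{H^1(S_\lambda)}$; your handling of the boundary term via (\ref{Projection}) reproduces the paper's $a_2^{\Delta}$ manipulation with $Q_h\varphi(0)$ in place of $w_h(0)$. The net effect is a shorter proof that dispenses with Lemma~\ref{wh} and the discrete dual error analysis entirely; the paper's symmetric treatment yields the marginally sharper strip bound $\sqrt{\lambda}\,\|e\|_{L^2(I;H)}$ rather than $(\sqrt{\lambda}+h)\|e\|_{L^2(I;H)}$, but since $\lambda=O(h^2)$ both deliver the same $O(h^2)$ conclusion. (Your use of an arbitrary $g$ with a supremum, versus the paper's direct choice $g=u-u_h$, is an immaterial presentational difference, and your signs in the perturbed Galerkin relation are consistent with $a_i^{\Delta}=a_i-a_{i,h}$.)
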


\begin{proof}
For the duality argument, we define
$w\in H^1(I;V)$ and  $w_h\in H^1(I;V_h^0)$
such that for a.e. $t\in I$,
\begin{equation*}
\begin{array}{l}
a_1(w(t),v)-a_2(w'(t),v)=(u(t)-u_h(t),v), \quad\forall v\in V, \\
a_{1}(w_h(t),v)-a_{2}(w'_{h}(t),v)=(u(t)-u_h(t),v),  \quad \forall v\in V_h^0,
\end{array}
\end{equation*}
which satisfies $w(T)=w_h(T)=0$.
That is, $w^{*}(t):=w(T-t)$ is the weak solution of (\ref{Equ})
with initial value $w^{*}(0)=0$ and $f$ replaced by $u-u_h$. Then Theorem \ref{Regularity3}
implies  that \begin{equation}\label{wregularity}
\displaystyle \|w\|_{ H^1(I;\mathcal{Y})}\lesssim \|u-u_h\|_{L^2(I;H)}.
\end{equation}
Using the same argument employed in Theorem \ref{H1esti} with
a natural modification, we find that
\begin{equation}\label{ceas}
\begin{array}{l}
\displaystyle \|w-w_h\|_{H^1(I;V))}
\displaystyle \lesssim h \|u-u_h\|_{L^2(I;H)}.
\end{array}
\end{equation}
By integration by parts with respect to the time variable, identity (\ref{Projection}) and taking advantage of the
Galerkin orthogonality for $w-w_h$ and $e:=u-u_h$, we know that
\begin{equation}\label{dual1}
\begin{array}{ l}
\displaystyle \int^T_{0}\big(a_1(e,w_h)-a_2(e,w'_{h})\big)dt\\
\displaystyle =\int_0^T\big(a_1(e,w_h)+a_2(e',w_h)\big)dt+a_2(u(0)-Q_h(0),w_h(0))
\\ \displaystyle = \int^T_{0}\left(-a^\Delta_1(u_h,w_h)-a^\Delta_2(u'_{h},w_h)\right)dt
+a^{\Delta}_2(Q_h u(0),w_h(0)),
\end{array}
\end{equation}
and for a.e. $t\in I$,
\begin{equation}\label{dual2}
\displaystyle a_1(w(t)-w_h(t),v)-a_2(w'(t)-w'_{h}(t),v)=0,   \quad \forall v\in V_h^0.
\end{equation}
Applying (\ref{dual1}) and (\ref{dual2}) and integrating by parts with respect to time variable, we obtain
\begin{equation*}
\begin{array}{lllr}
&&\|e\|_{L^2(I;H)}^2\\
 &=& \displaystyle \int^T_{0}\big(a_1(e,w)-a_2(e,w')\big)dt\\
 &=& \displaystyle  \int_0^T\big( a_1(e,w-w_h)-a_2(e,w'-w'_{h}) +a_1(e,w_h)-a_2(e,w'_{h})\big)dt\\\
&=&\displaystyle  \int^T_{0}\big(a_1(u-Q_h u,w-w_h)-a_2(u-Q_h u,w'-w'_{h})\big)dt  \\
 \displaystyle
&& \displaystyle  -\int^T_{0}\big(a^\Delta_1(u_h,w_h)+a^\Delta_2(u'_{h},w_h)\big)dt+a^{\Delta}_2(Q_h u(0),w_h(0))\\\\
 &=& \displaystyle   \int^T_{0}\big(a_1(u-Q_h u,w-w_h)+a_2(u'-Q_h u',w-w_{h})\big)dt  \\
&& \displaystyle  -\int^T_{0}\left(a^\Delta_1(u_h,w_h)+a^\Delta_2(u'_{h},w_h)\right)dt\\
 && \displaystyle  +a^{\Delta}_2(Q_h u(0),w_h(0))+a_2(u(0)-Q_h u(0),w(0)-w_h(0))\\
&=:& \text{(II)}_1+\text{(II)}_2+\text{(II)}_3.
\end{array}
\end{equation*}
By the Cauchy-Schwarz inequality it follows that
\begin{equation*}
\begin{array}{l}
\text{(II)}_1\lesssim \|u-Q_h u\|_{L^2(I;V)}\|w-w_h\|_{L^2(I;V)}+
\|u'-Q_h u'\|_{L^2(I;V)}\|w-w_h\|_{L^2(I;V)}
\end{array}
\end{equation*}
Applying Lemma \ref{Qestimate}, the regularity estimate (\ref{wregularity}) and
Theorem \ref{H1esti}, we have
\bb\label{I1}
 \text{(II)}_1\leq h^2\left(\|f\|_{L^2(I;H)}+\|u_0\|_{\mathcal{Y}}\right)\|e\|_{L^2(I;H)}.
\ee
By Lemma \ref{deltaestimate} and the Cauchy-Schwarz inequality,
\begin{equation*}
\begin{array}{l}
 \text{(II)}_2 \lesssim (|u_h|_{L^2(I;H^1(S_\lambda))}+|u'_h|_{L^2(I;H^1(S_\lambda))})|w_h|_{L^2(I;H^1(S_\lambda))}.
\end{array}
\end{equation*}
Before we further estimate $\text{(II)}_2$, we first bound $u_h$, $w$
and $w_h$ in $H^1(I;H^1(S_\lambda))$. Applying Lemma \ref{interpolation} and Theorem \ref{H1esti}, we have
\begin{equation}\label{I31}
\|u_h\|_{H^1(I;H^1(S_\lambda))}\leq \|e\|_{H^1(I;H^1(S_\lambda))}+\|u\|_{H^1(I;H^1(S_\lambda))}
\lesssim (\sqrt{\lambda}+h)\|u\|_{H^1(I;\mathcal{Y})}.
\end{equation}
On the other hand, using Lemma \ref{interpolation} and the regularity estimate (\ref{wregularity}), it follows that
$$
\|w\|_{H^1(I;H^1(S_\lambda))}\lesssim \sqrt{\lambda}
\|w\|_{H^1(I;\mathcal{Y})}\lesssim \sqrt{\lambda}\|e\|_{L^2(I;H)}.
$$
Using Lemmas \ref{Qestimate} and \ref{wh}, the  regularity estimate (\ref{wregularity}), (\ref{ceas}) and
the condition $2\lambda\leq h$, we have
\beqn\label{I32}
\|w_h\|_{H^1(I;H^1(S_\lambda))}&\lesssim&
\sqrt{\lambda}h^{-\frac{1}{2}}\|w_h\|_{H^1(I;H^1(S_{\mu\lambda}))}\notag\\
&\lesssim& \sqrt{\lambda}h^{-\frac{1}{2}}\left(\|w-w_h\|_{H^1(I;H^1(S_{\mu\lambda}))}+\|w\|_{H^1(I;H^1(S_{\mu\lambda}))}\right)\\
&\lesssim& \sqrt{\lambda}h^{-\frac{1}{2}}\left(\|w-w_h\|_{H^1(I;H^1(S_{\mu\lambda}))}
+h^{\frac{1}{2}}\|w\|_{L^2(I;\mathcal{Y})}\right)\notag\\
&\lesssim& \sqrt{\lambda}\|e\|_{L^2(I;H)}. \notag
\eqn
Now, (\ref{I31}), (\ref{I32}) and Theorem \ref{Regularity3} yield
\begin{equation}\label{I2}
\begin{array}{l}
 \text{(II)}_2 \lesssim (\lambda+\sqrt{\lambda}h)\left(\|f\|_{L^2(I;H)}+\|u_0\|_{\mathcal{Y}}\right)\|e\|_{L^2(I;H)}.
\end{array}
\end{equation}
To bound $\text{(II)}_3$, we first need to estimate $ |Q_hu(0)|_{H^1(S_\lambda)}$ and
$|w_h(0)|_{H^1(S_\lambda)}$. To this end, applying Lemmas \ref{Qestimate}
and \ref{interpolation}, we find that
 $$
 |Q_hu(0)|_{H^1(S_\lambda)}\leq  |Q_hu(0)-u(0)|_{H^1(S_\lambda)}+|u(0)|_{H^1(S_\lambda)}\lesssim  (\sqrt{\lambda}+h)\|u(0)\|_{\mathcal{Y}}.
 $$
On the other hand, from (\ref{embeding})  it readily follows  that
$$
\|w_h(0)\|_{H^1(S_\lambda)}\leq \sup_{t\in \overline{I}}\|w_h(t)\|_{H^1(S_\lambda)} \lesssim \|w_h\|_{H^1(I;H^1(S_\lambda))}.
$$
Similarly, we have $\|w(0)-w_h(0)\|_V \lesssim  \|w-w_h\|_{H^1(I;V)}$. We thus find
$$
\|w_h(0)\|_{H^1(S_\lambda)} \lesssim \|w_h\|_{H^1(I;H^1(S_\lambda))}\lesssim \sqrt{\lambda}\|e\|_{L^2(I;H)}.
$$
Summarizing the above estimates, we get
\beqn\label{I3}
 \text{(II)}_3 &\lesssim& |Q_h u(0)|_{H^1(S_\lambda)}|w_h(0)|_{H^1(S_\lambda)}+\|Q_h u(0)-u(0)\|_V\|w(0)-w_h(0)\|_V\\
&\lesssim&
 (\lambda+h\sqrt{\lambda}+h^2) \|u(0)\|_{\mathcal{Y}}\|e\|_{L^2(I;H)}. \notag
\eqn
Taking (\ref{lambda}), (\ref{I1}), (\ref{I2}), and (\ref{I3}) into
consideration, we can conclude the desired estimate.
\end{proof}

%

\subsection{Fully discrete finite element scheme and error estimates}

In this subsection, we are going to formulate a fully discrete finite element scheme to approximate
the solution to the interface problem (\ref{Equ}) and (\ref{jump}). 
We shall use the backward Euler scheme for the time discretization.
Let us start with dividing the time interval $I$ into
$N$ equally spaced subintervals and using the following nodal
points:
$$
0=t^0<t^1<\cdots <t^N=T,
$$
where $t^n=n\tau$ for
$n=0,1,\cdots, N$ and  $\tau=T/N$.  For any given discrete time
sequence $\{u^n\}_{n=0}^N$ in $V$ and
a function $g(x,t)$ which is continuous with respect to $t$, we can define
$$
\p_\tau w^n=\f{w^n-w^{n-1}}{\tau},  \quad
\overline{g}^n=\f{1}{\tau}\int_{t^{n-1}}^{t^n}g(\cdot, s)ds,\quad \widehat{g}^n(\cdot)=g(\cdot,t^n), \quad n=1,\cdots, N.
$$

Now, we propose  a fully discrete finite element scheme to approximate the solution to the interface problem
(\ref{Equ}) and (\ref{jump}).

\medskip

\noindent {\bf Problem ($\mathbf{P_{h,\tau}}$).}
Let $u^0_h=Q_h u_0$. For each $n=1,2,\cdots, N$, find $u^n_h\in V^0_h$ such that
\begin{equation}\label{FD}
a_{1,h}(u^{n}_h,v_h)+a_{2,h}(\partial_\tau u^n_h,v_h)=( \widehat{f}^n,v_h), \quad \forall v_h\in V_h^0.
\end{equation}

For a discrete sequence $\{u_h^n\}^N_{n=1}$ defined in  Problem {\bf ($\mathbf{P_{h,\tau}}$)},
we can introduce a piecewise constant function in time by
\bb
u_{h,\tau}(\cdot, t)=u_h^n(\cdot),  \quad \forall t\in(t^{n-1},t^n], \quad \ n=1,2,\cdots,N.
\ee
Then, we say that $u_{h,\tau}$ is a solution of  Problem {\bf ($\mathbf{P_{h,\tau}}$)}, which is
a fully discrete approximation of the solution to the interface problem (\ref{Equ}) and (\ref{jump}).
In order to compute the error between $u_{h,\tau}$ and $u$, it suffices to establish
the error between $u_{h,\tau}$ and $u_h$, i.e. the solution of the semi-discrete scheme (\ref{SD}), 
since the error between $u_h$ and
$u$ has been studied in Section 4.1. To this end, we need the following auxiliary result.
\begin{Lemma}\label{difference}
Let $\{F_n\}^N_{n=1}$ be a time discrete sequence lying in $V'$ and $w_h^0=0$.
There exists a unique sequence $\{w^n_h\}^N_{n=1}$ such that for $n=1,2,\cdots, N$,
\begin{equation}\label{wSD}
a_{1,h}(w^{n}_h,v_h)+a_{2,h}(\partial_\tau w^n_h,v_h)=\langle F_n,v\rangle_{V'\t V},  \quad \forall v_h\in V_h^0.
\end{equation}
Moreover, the sequence $\{w^n_h\}^N_{n=1}$ has the following stability estimate:
\bb\label{wh1}
\max_{1\leq n\leq N}\|w_h^{n}\|_V^2\lesssim \tau \sum^N_{n=1} \|F_n\|^2_{V'}.
 \ee
\end{Lemma}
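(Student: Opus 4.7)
The plan is to establish existence and uniqueness step by step via a finite-dimensional Lax--Milgram argument, and then to derive the stability estimate by a standard discrete energy method combined with a max--trick to get the correct $L^2$-in-time form on the right-hand side.

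First, for existence and uniqueness, I would rewrite (\ref{wSD}) at step $n$ in the equivalent form
\[
\tau a_{1,h}(w_h^n,v_h)+a_{2,h}(w_h^n,v_h)=\tau\langle F_n,v_h\rangle_{V'\times V}+a_{2,h}(w_h^{n-1},v_h),\qquad\forall v_h\in V_h^0.
\]
The bilinear form $B(u,v):=\tau a_{1,h}(u,v)+a_{2,h}(u,v)$ is continuous on $V_h^0\times V_h^0$ by Lemma~\ref{deltaestimate}, and it is coercive because $a_{1,h}(u,u)\ge 0$ (using $(A6)$) and $a_{2,h}(\cdot,\cdot)$ is coercive on $V$. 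Since $V_h^0$ is a closed subspace of $V$, Lax--Milgram (or equivalently, invertibility of the associated positive-definite matrix) yields a unique $w_h^n\in V_h^0$ at each step. Induction from $w_h^0=0$ produces the entire sequence.

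For the stability bound, I would test (\ref{wSD}) with $v_h=w_h^n$ and use the symmetric bilinear-form identity
\[
2\tau\, a_{2,h}(\partial_\tau w_h^n, w_h^n)=a_{2,h}(w_h^n,w_h^n)-a_{2,h}(w_h^{n-1},w_h^{n-1})+a_{2,h}(w_h^n-w_h^{n-1},w_h^n-w_h^{n-1}).
\]
Discarding the last non-negative term together with the non-negative $a_{1,h}(w_h^n,w_h^n)$, I obtain
\[
a_{2,h}(w_h^n,w_h^n)-a_{2,h}(w_h^{n-1},w_h^{n-1})\leq 2\tau\,\|F_n\|_{V'}\|w_h^n\|_V.
\]
Summing from $n=1$ to an arbitrary $m\leq N$, using $w_h^0=0$, and invoking the coercivity of $a_{2,h}$ gives
\[
\|w_h^m\|_V^2\lesssim \tau\sum_{n=1}^m\|F_n\|_{V'}\|w_h^n\|_V\leq \tau\,\Bigl(\max_{1\leq k\leq N}\|w_h^k\|_V\Bigr)\sum_{n=1}^N\|F_n\|_{V'}.
\]

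Taking the maximum over $m\in\{1,\dots,N\}$ and dividing by $\max_k\|w_h^k\|_V$ (handling the trivial case separately) yields $\max_k\|w_h^k\|_V\lesssim \tau\sum_{n=1}^N\|F_n\|_{V'}$. A discrete Cauchy--Schwarz inequality
\[
\tau\sum_{n=1}^N\|F_n\|_{V'}\leq \sqrt{N\tau}\,\Bigl(\tau\sum_{n=1}^N\|F_n\|_{V'}^2\Bigr)^{1/2}=\sqrt{T}\,\Bigl(\tau\sum_{n=1}^N\|F_n\|_{V'}^2\Bigr)^{1/2}
\]
and squaring then deliver (\ref{wh1}). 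The only mildly delicate step is the max--trick that converts a linear-in-$\|w_h^k\|$ estimate into a quadratic one without closing up onto itself; everything else is routine given Lemma~\ref{deltaestimate} and $(A6)$.
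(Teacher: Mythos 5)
Your proof is correct, but the stability estimate is derived by a genuinely different energy argument from the paper's. The paper tests \eqref{wSD} with $v_h=2\tau\,\partial_\tau w_h^n$, telescopes the $a_{1,h}$ term via $2\tau a_{1,h}(w_h^n,\partial_\tau w_h^n)=a_{1,h}(w_h^n,w_h^n)-a_{1,h}(w_h^{n-1},w_h^{n-1})+\tau^2 a_{1,h}(\partial_\tau w_h^n,\partial_\tau w_h^n)$, and uses the coercivity of $a_{2,h}$ on the resulting diagonal term to obtain $m\sum_n\|\partial_\tau w_h^n\|_V^2\lesssim\sum_n\|F_n\|_{V'}^2$; the bound \eqref{wh1} then follows from $\|w_h^n\|_V^2\leq T\tau\sum_k\|\partial_\tau w_h^k\|_V^2$. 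You instead test with $v_h=w_h^n$, telescope the $a_{2,h}$ term, discard the nonnegative $a_{1,h}(w_h^n,w_h^n)$ and increment terms, and close with the max--trick plus discrete Cauchy--Schwarz. Both routes rely on exactly the same structural facts ($a_{1,h}\geq 0$ under $(A6)$, coercivity of $a_{2,h}$), and the existence/uniqueness part via Lax--Milgram is identical. The trade-off: your argument yields the slightly sharper $\ell^1$-in-time bound $\max_k\|w_h^k\|_V\lesssim\tau\sum_n\|F_n\|_{V'}$, from which \eqref{wh1} follows by Cauchy--Schwarz, while the paper's choice of test function additionally controls the discrete time derivative, $\tau\sum_n\|\partial_\tau w_h^n\|_V^2\lesssim\tau\sum_n\|F_n\|_{V'}^2$, a by-product your route does not provide (and which mirrors the continuous a priori estimate of Theorem \ref{prior-estimate}). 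For the purpose of proving \eqref{wh1} alone, either argument suffices.
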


\begin{proof}
The existence and uniqueness follows immediately from the Lax-Milgram theorem. Taking $v_h=2\tau \p w_h^n$ in
(\ref{wSD}), 
then using  the relation
$$
2\tau a_{1,h}( w_h^n,\partial_\tau w^{n}_h)=a_{1,h}(w_h^n,w^n_h)-a_{1,h}(w_h^{n-1},w_h^{n-1})
+\tau^2a_{1,h}(\p_\tau w_h^n,\p_\tau w^n_h), \quad \forall \ n=1,\cdots,N,
$$
and the coercivity of $a_{2,h}(\cdot,\cdot)$, we obtain that
$$
2m\tau\|\p_\tau w_h^n\|^2_V
+a_{1,h}(w_h^{n},w_h^{n})-a_{1,h}(w_h^{n-1},w_h^{n-1})\leq
2\tau  \|F_n\|_{V'}\|\p_\tau w_h^n\|_V , \quad \ \forall n=1,2,\cdots,N.
$$
Adding the above inequalities
from $n=1$ to $n=N$, and
using the Cauchy inequality, one has 
$$
{m}\sum^N_{n=1}\|\p_\tau w_h^n\|^2_V\lesssim \sum^N_{n=1}\|F_n\|_{V'}^2.
$$
Now the desired estimate follows immediately from the inequality  
$$
\|w_h^n\|^2_V\leq T\tau\sum^N_{n=1}\|\p_\tau w_h^n\|^2_V , \quad \forall \ 1\leq n\leq N.
$$
\end{proof}

>From the lemma above we notice that  Problem {\bf ($\mathbf{ P_{h,\tau}}$)} always admits
a unique solution.
\begin{Lemma}\label{deference2}
Let $u_{h,\tau}$ and $u_h$ be the solution of Problem {\bf ($\mathbf{ P_{h,\tau}}$)} and
Problem {\bf  ($\mathbf{ P_{h}}$)}, respectively.
Under the assumption that  $f\in H^1(I;H)$, the following estimate holds:
$$
\|u_h-u_{\tau,h}\|_{L^2(I;V)}\lesssim \tau\left(\|f'\|_{L^2(I;H)}+\|f\|_{L^2(I;H)}+\|u_0\|_{\mathcal{Y}}\right),
$$
\end{Lemma}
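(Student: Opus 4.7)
The approach is to bound the nodal defect $\zeta^n := u_h(t^n) - u_h^n$ in $V$ via Lemma \ref{difference}, then combine the resulting discrete estimate with a Taylor-type bound for $u_h(t) - u_h(t^n)$ to recover the $L^2(I;V)$ norm. To produce an error equation that fits the template of Lemma \ref{difference}, I would integrate the semi-discrete equation (\ref{SD}) over $(t^{n-1},t^n)$ and divide by $\tau$; linearity of $a_{2,h}(\cdot,v_h)$ yields
$$a_{1,h}(\overline{u_h}^n, v_h) + a_{2,h}(\partial_\tau u_h(t^n), v_h) = (\overline{f}^n, v_h)\quad\forall\,v_h\in V_h^0,$$
with $\overline{u_h}^n := \tau^{-1}\int_{t^{n-1}}^{t^n} u_h(s)\,ds$. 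Subtracting (\ref{FD}) and decomposing $\overline{u_h}^n - u_h^n = \zeta^n - (u_h(t^n) - \overline{u_h}^n)$ then gives
$$a_{1,h}(\zeta^n, v_h) + a_{2,h}(\partial_\tau \zeta^n, v_h) = \langle F_n, v_h\rangle_{V'\times V},\quad \zeta^0 = 0,$$
where $\langle F_n, v\rangle := (\overline{f}^n - \widehat{f}^n, v) + a_{1,h}(u_h(t^n) - \overline{u_h}^n, v)$.

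Both pieces of $F_n$ can be controlled by integral Taylor identities. Writing $\widehat{f}^n - \overline{f}^n = \tau^{-1}\int_{t^{n-1}}^{t^n}\int_s^{t^n} f'(r)\,dr\,ds$ and applying Cauchy--Schwarz, one obtains $\sum_n\tau\|\overline{f}^n - \widehat{f}^n\|_H^2 \lesssim \tau^2\|f'\|_{L^2(I;H)}^2$. The identical manipulation on $u_h$ yields $\sum_n\tau\|u_h(t^n) - \overline{u_h}^n\|_V^2 \lesssim \tau^2\|u_h'\|_{L^2(I;V)}^2$, and Lemma \ref{discrete-bound} together with Lemma \ref{Qestimate} (which gives $\|Q_hu_0\|_V \lesssim \|u_0\|_{\mathcal{Y}}$) bounds the factor $\|u_h'\|_{L^2(I;V)}$ by $\|f\|_{L^2(I;H)} + \|u_0\|_{\mathcal{Y}}$. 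Combining this with the boundedness of $a_{1,h}$ from Lemma \ref{deltaestimate} produces
$$\tau\sum_{n=1}^N\|F_n\|_{V'}^2 \lesssim \tau^2\bigl(\|f'\|_{L^2(I;H)}^2 + \|f\|_{L^2(I;H)}^2 + \|u_0\|_{\mathcal{Y}}^2\bigr),$$
so that Lemma \ref{difference} applied to the error equation gives $\max_n\|\zeta^n\|_V \lesssim \tau\bigl(\|f'\|_{L^2(I;H)} + \|f\|_{L^2(I;H)} + \|u_0\|_{\mathcal{Y}}\bigr)$.

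Finally, splitting $u_h(t) - u_{h,\tau}(t) = (u_h(t) - u_h(t^n)) + \zeta^n$ on $(t^{n-1},t^n]$, the first piece is handled by $\|u_h(t) - u_h(t^n)\|_V \le \sqrt{\tau}\,\|u_h'\|_{L^2(I_n;V)}$, giving $\sum_n\int_{t^{n-1}}^{t^n}\|u_h(t) - u_h(t^n)\|_V^2\,dt \le \tau^2\|u_h'\|_{L^2(I;V)}^2$, while $\sum_n\tau\|\zeta^n\|_V^2 \le T\max_n\|\zeta^n\|_V^2$ disposes of the second; adding the two pieces and taking square roots yields the claimed bound. The main subtlety I would expect lies in the setup of the error equation: because $a_{1,h}$ is paired with the state and $a_{2,h}$ with the time derivative in \emph{both} schemes, one must subtract the time-\emph{averaged} semi-discrete equation rather than its pointwise value at $t^n$, so that the residual involves only the averaging gaps $\overline{u_h}^n - u_h(t^n)$ and $\overline{f}^n - \widehat{f}^n$, each of which is $O(\sqrt{\tau})$ per step in the appropriate norm. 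Any other natural choice (e.g., evaluating the semi-discrete equation at $t = t^n$) forces an $a_{2,h}(\delta^n,\cdot)$ term on the right-hand side involving the consistency error $u_h'(t^n) - \partial_\tau u_h(t^n)$, which is not available since $u_h$ is only in $H^1(I;V_h^0)$.
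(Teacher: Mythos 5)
Your proposal is correct and follows essentially the same route as the paper: both integrate the semi-discrete equation (\ref{SD}) over each subinterval to form an error equation for the nodal defect $u_h(t^n)-u_h^n$ whose residual consists only of the averaging gaps $\overline{f}^n-\widehat{f}^n$ and $\overline{u}^n_h-\widehat{u}^n_h$, invoke Lemma \ref{difference} for stability, and add the piecewise-constant interpolation error $\|u_h-u^*_{h,\tau}\|_{L^2(I;V)}\lesssim\tau\|u_h\|_{H^1(I;V)}$ bounded via Lemma \ref{discrete-bound}. Your closing remark on why the time-averaged (rather than pointwise) semi-discrete equation must be used is precisely the point exploited in the paper's argument.
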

\begin{proof}
We first define a piecewise constant function in time such that $u^*_{h,\tau}(0)=Q_hu_0$ and
$$
u^*_{h,\tau}(\cdot,t)= \widehat{u}^n_h(\cdot),   \quad \forall t\in(t^{n-1},t^n], \quad n=1,2,\cdots,N.
$$
Using Lemmas \ref{Qestimate} and \ref{discrete-bound}, it follows readily that
\bb\label{def-eq1}
\|u_h-u^{*}_{h,\tau}\|_{L^2(I;V)}\lesssim \tau \|u_h\|_{H^1(I;V)}\lesssim \tau(\|f\|_{L^2(I;H)}+\|u_0\|_{\mathcal{Y}}).
\ee
Integrating
(\ref{SD}) over $(t^{n-1},t^{n})$ and dividing both sides by $\tau$, we have for $n=1,2,\cdots,N$,
\begin{equation} \label{addeq2}
a_{1,h}(\overline{u}^n_h,v_h)+a_{2,h}(\partial_\tau \widehat{u}^n_h,v_h)=(\o{f}^n,v_h),  \quad\forall v_h\in V_h^0 .
\end{equation}
Subtracting both sides of (\ref{addeq2}) above from those of (\ref{FD}), we can rewrite the resulting
equation as
$$
a_{1,h}(u^{n}_h-\widehat{u}^n_h,v_h)+a_{2,h}(\partial_\tau (u^n_h-\widehat{u}^n_h),v_h)
=(\widehat{f}^n-\o{f}^n,v_h)+a_{1,h}(\overline{u}^n_h-\widehat{u}_h,v_h),  \quad \forall v_h\in V_h^0.
$$
The right-hand side of the equation above defines a functional on $V$ for each $n=1,2,\cdots, N$.
Indeed, we have for $n=1,2,\cdots,N$,
$$
|(\widehat{f}^n-\o{f}^n,v)+a_{1,h}(\overline{u}^n_h-\widehat{u}_h,v)|
\lesssim \left(\|\widehat{f}^n-\o{f}^n\|_{H}+\|\overline{u}^n_h-\widehat{u}_h\|_V\right)\|v\|_V , \quad \forall\ v\in V
$$
by using Poincar\'{e}'s inequality.
Therefore we can apply  Lemma {\ref{difference}} to obtain
\beqnx
\|u^{*}_{h,\tau}-u_{h,\tau}\|^2_{L^2(I;V)}&=&\sum^N_{n=1}\tau \|u^{n}_h-\widehat{u}^n_h\|_V^2
\leq  T \max_{1\leq n\leq N}\|u^{n}_h-\widehat{u}^n_h\|_V^2 \\
&\lesssim&\tau \sum^N_{n=1} \left(\|\widehat{f}^n-\o{f}^n\|^2_{H}+\|\overline{u}^n_h-\widehat{u}_h\|^2_V\right)\\
&\lesssim&\tau^2(\|f'\|^2_{L^2(I;H)}+\|u_h\|^2_{H^1(I;V)})\\
&\lesssim&\tau^2( \|f'\|_{L(I;H)}^2+ \|f\|_{L(I;H)}^2+\|u_0\|^2_{\mathcal{Y}}).
\eqnx
Now the desired result follows from the previous estimate,
(\ref{def-eq1}) and the following triangular inequality
$$
\|u_h-u_{h,\tau}\|_{L^2(I;V)}\leq \|u_h-u^{*}_{h,\tau}\|_{L^2(I;V)}+ \|u^{*}_{h,\tau}-u_{h,\tau}\|_{L^2(I;V)}.
$$
\end{proof}
>From Lemma \ref{deference2} and Theorems \ref{H1esti} and \ref{L2esti},
the following result follows immediately.
\begin{Theorem}\label{Ferror}
Let
$u$ be the solution to the interface problem $(\ref{Equ})$ and $(\ref{jump})$ and
$u_{h,\tau}$ the solution to  Problem {\bf($\mathbf{ P}_{h,\tau}$)}.
Under the assumption of Lemma \ref{deference2}, the following estimates hold:
$$
\|u-u_{h,\tau}\|_{L^2(I;V)}\lesssim
\left(\tau+h\right)(\|f\|_{L^2(I;H)}+\|f'\|_{L^2(I;H)}+\|u_0\|_{\mathcal{Y}}),
$$
$$
\|u-u_{h,\tau}\|_{L^2(I;H)}\lesssim
\left(\tau+h^2\right)(\|f\|_{L^2(I;H)}+\|f'\|_{L^2(I;H)}+\|u_0\|_{\mathcal{Y}}).
$$
\end{Theorem}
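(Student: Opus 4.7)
The plan is to prove both bounds by the triangle inequality, splitting the total error into a spatial part $u - u_h$ (controlled by the semi-discrete analysis of Section 4.1) and a temporal part $u_h - u_{h,\tau}$ (controlled by Lemma \ref{deference2}). Since all of the hard work is already contained in Theorems \ref{H1esti}, \ref{L2esti} and Lemma \ref{deference2}, this will essentially be a bookkeeping step, so I do not expect a serious obstacle; the only point that requires minor care is upgrading the time-stepping estimate from the $L^2(I;V)$ norm down to the $L^2(I;H)$ norm.

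First I would write
\[
\|u - u_{h,\tau}\|_{L^2(I;V)} \le \|u - u_h\|_{L^2(I;V)} + \|u_h - u_{h,\tau}\|_{L^2(I;V)}.
\]
For the first term, I would use that $L^2(I;V) \hookrightarrow H^1(I;V)$ trivially bounds the $L^2(I;V)$ norm by the $H^1(I;V)$ norm, and then invoke Theorem \ref{H1esti} to get a bound of order $h(\|f\|_{L^2(I;H)} + \|u_0\|_{\mathcal{Y}})$. For the second term, Lemma \ref{deference2} directly gives a bound of order $\tau(\|f'\|_{L^2(I;H)} + \|f\|_{L^2(I;H)} + \|u_0\|_{\mathcal{Y}})$. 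Adding yields the first asserted estimate.

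Next, for the $L^2(I;H)$ bound, I would again split
\[
\|u - u_{h,\tau}\|_{L^2(I;H)} \le \|u - u_h\|_{L^2(I;H)} + \|u_h - u_{h,\tau}\|_{L^2(I;H)}.
\]
The first term is controlled by Theorem \ref{L2esti} with rate $h^2(\|f\|_{L^2(I;H)} + \|u_0\|_{\mathcal{Y}})$. For the second term, I would observe that the $L^2(\Omega)$ norm is dominated by the $V = H^1_0(\Omega)$ norm via the Poincaré inequality, so $\|u_h - u_{h,\tau}\|_{L^2(I;H)} \lesssim \|u_h - u_{h,\tau}\|_{L^2(I;V)}$, and then apply Lemma \ref{deference2} once more to obtain the $\tau$-rate. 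Combining gives the second asserted estimate.

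The main (and essentially only) subtlety is that the temporal part contributes only a first-order $\tau$ factor even in the $L^2(I;H)$ estimate, which is why the right-hand side of the $L^2(I;H)$ bound involves $\tau + h^2$ rather than $\tau + h^2 + \tau h$ or similar; since $\tau$ dominates $\tau h$ and $\tau^2$, no further refinement of the temporal estimate is needed, and the theorem follows directly from the triangle inequalities above together with Theorems \ref{H1esti}, \ref{L2esti} and Lemma \ref{deference2}.
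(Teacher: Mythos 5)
Your proposal is correct and is exactly the argument the paper intends: the paper gives no separate proof, stating only that Theorem \ref{Ferror} ``follows immediately'' from Theorems \ref{H1esti}, \ref{L2esti} and Lemma \ref{deference2}, which is precisely your triangle-inequality bookkeeping (with the $L^2(I;V)$ norm dominated by the $H^1(I;V)$ norm for the spatial part and Poincar\'e for the temporal part in $L^2(I;H)$). The only cosmetic slip is writing the embedding as $L^2(I;V)\hookrightarrow H^1(I;V)$ rather than the reverse, but the inequality you actually use is the correct one.
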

%

\end{document}